\documentclass[10pt,a4paper,reqno]{amsart}

\usepackage[utf8]{inputenc}
\usepackage[T1]{fontenc}
\usepackage{lmodern}

\usepackage[english]{babel}

\usepackage{amsmath}
\usepackage{amsfonts}
\usepackage{amssymb}
\usepackage{amsthm}

\usepackage{hyperref}

\theoremstyle{plain}
	\newtheorem{theorem}{Theorem}
	\newtheorem{proposition}[theorem]{Proposition}
	\newtheorem{lemma}[theorem]{Lemma}
	\newtheorem{corollary}[theorem]{Corollary}
\theoremstyle{definition}
	\newtheorem{definition}[theorem]{Definition}
	\newtheorem{example}[theorem]{Example}
\theoremstyle{remark}
	\newtheorem{remark}[theorem]{Remark}

\DeclareMathOperator{\Int}{Int}
\DeclareMathOperator{\End}{End}
\DeclareMathOperator{\supp}{supp}
\DeclareMathOperator{\Sym}{Sym}
\DeclareMathOperator{\diag}{diag}
\DeclareMathOperator{\Aut}{Aut}
\DeclareMathOperator{\Stab}{Stab}
\DeclareMathOperator{\Diag}{Diag}
\DeclareMathOperator{\id}{id}
\DeclareMathOperator{\rad}{rad}
\DeclareMathOperator{\Hom}{Hom}
\DeclareMathOperator{\Quad}{Quad}
\DeclareMathOperator{\Arf}{Arf}

\newcommand{\cR}{\mathcal{R}}
\newcommand{\cD}{\mathcal{D}}
\newcommand{\cV}{\mathcal{V}}
\newcommand{\ZZ}{\mathbb{Z}}
\newcommand{\FF}{\mathbb{F}}
\newcommand{\RR}{\mathbb{R}}
\newcommand{\CC}{\mathbb{C}}
\newcommand{\HH}{\mathbb{H}}
\newcommand{\idem}{\varepsilon}

\begin{document}

\title{Fine~gradings and automorphism~groups on associative~algebras}

\author{Adri\'an Rodrigo-Escudero}
\address{Departamento de Matem\'aticas y Computaci\'on,
Universidad de La Rioja, 26006 Lo\-gro\-\~no, Spain.}
\email{adrian.rodrigo.escudero@gmail.com}

\date{27 July 2020}
\subjclass[2010]{Primary 16W50; secondary 16W20.}
\keywords{Fine grading; automorphism group;
associative algebra; graded-simple;
inner automorphism; graded-division.}

\begin{abstract}
First we prove that
any inner automorphism
in the stabilizer of
a graded-simple unital associative algebra
whose grading group is abelian
is the conjugation by a homogeneous element.
Now consider a grading by an abelian group
on an associative algebra
such that the algebra is graded-simple and
satisfies the DCC on graded left ideals.
We give necessary and sufficient conditions
for the grading to be fine.
Then we assume that
one of these necessary conditions to be fine
is satisfied,
and we compute the automorphism groups of the grading;
the results are expressed in terms of
the automorphism groups of a graded-division algebra.
Finally we compute the automorphism groups
of graded-division algebras
in the case in which
the ground field is the field of real numbers,
and the underlying algebra
(disregarding the grading)
is simple and of finite dimension.
\end{abstract}

\maketitle

\tableofcontents

\section{Introduction}

In recent years
the study of group gradings on different types of algebras
has become a very active research field.
Today the monograph \cite{EK2013}
is one of the main references on this topic.
There we find,
among many other things,
an overview of the progress done by numerous authors on this field,
and also a compilation
(and sometimes homogenization)
of the terminology about gradings that has appeared in the literature.
In this text we assume that
the basic definitions and properties of gradings
that appear in \cite{EK2013}
are known.

If we want to choose a type of algebra
to study its gradings,
associative algebras are a natural starting point.
Among the references that deal with this subject,
we can cite for example
\cite{BZ2002,Zol2002,BZ2003,BBK2012,BZ2016,Rod2016,BZ2018}.
Let us remark that this study is interesting
not only because
the good properties derived from the associative condition
make this question an approachable problem,
but also because the theory of affine group schemes
allows to transfer classifications of gradings
from associative algebras to nonassociative algebras.
Examples of this technique can be found in
\cite{BK2010,Eld2010,BKR2018b}.

One of the questions proposed
by the theory of gradings
is to determine if a grading is fine.
Besides being a natural question,
it is important to know the fine gradings of an algebra,
because the classification up to equivalence
of gradings that are not necessarily fine
is a difficult problem,
as it is illustrated in \cite[example 2.41 and figure 2.2]{EK2013}.
Therefore the classification of gradings up to equivalence
is usually restricted to fine gradings,
see for example
\cite{HPP1998,BSZ2001,CDM2010,Eld2010,DV2016,Eld2016,DE2017,EK2018}.
Once we have classified all gradings on a certain algebra,
a possible next step is to compute
the automorphism groups of those gradings.
For example
this has been done in
\cite{BK2010,EK2012}.

\medskip

The general objective of this article is
to characterize fine gradings on associative algebras,
and to compute their automorphism groups.
We will always assume that the algebras are graded-simple
and that the grading groups are abelian.

\medskip

More specifically
the main results and the structure of the article
are the following.
In section \ref{sect:inn_aut}
we prove theorem \ref{th:inn_aut},
which implies that
any inner automorphism
in the stabilizer of
a graded-simple unital associative algebra
whose grading group is abelian
is the conjugation by a homogeneous element.

Let $\cR$ be a graded-simple associative algebra
that satisfies the descending chain condition on graded left ideals
and such that its grading group is abelian.
By \cite[theorem 2.6 and equation (2.1)]{EK2013},
$\cR$ is a graded matrix algebra
over a graded-division algebra $\cD$.
The main result of section \ref{sect:fine_grad}
is theorem \ref{th:fine_grad},
which states that the grading on $\cR$ is fine
if and only if the division grading on $\cD$ is fine and
the fine condition of definition \ref{def:fine_cond} is satisfied.
As a consequence,
in remark \ref{rem:clas_equiv}
we observe that the classification,
up to equivalence,
of fine gradings by abelian groups
such that the algebra is graded-simple
and satisfies the descending chain condition on graded left ideals
is reduced to
the classification of fine division gradings.

In section \ref{sect:aut_groups} we assume that
the fine condition of definition \ref{def:fine_cond} is satisfied,
and we express the automorphism groups
of the graded algebra $\cR$
in terms of the automorphism groups
of the graded-division algebra $\cD$.
Propositions
\ref{pro:stab}, \ref{pro:diag_group} and \ref{pro:Weyl_group}
are respectively devoted to compute
the stabilizer, the diagonal group and the Weyl group of the grading.

Finally in sections \ref{sect:compl_grad} and \ref{sect:non-compl_grad}
we compute the automorphism groups
of the graded-division algebra $\cD$
in the case in which
the ground field is the field of real numbers,
and the underlying algebra $\cD$
(disregarding the grading)
is simple and of finite dimension.
We divide the analysis in two sections,
so that in section \ref{sect:compl_grad}
we study the case in which the graded real algebra $\cD$
can be regarded as a graded algebra
over the field of complex numbers,
and in section \ref{sect:non-compl_grad}
we study the rest of the cases.
The main results of these sections are
propositions \ref{pro:aut_2-f} and \ref{pro:stab_non_compl},
and remark \ref{rem:Weyl_non-compl}.

\section{Inner automorphisms}\label{sect:inn_aut}

\begin{theorem}\label{th:inn_aut}
Let $G$ be an abelian group,
$\FF$ a field,
and $\cR$ a $G$-graded unital associative $\FF$-algebra.
Assume that $\cR$ is graded-simple.
Let $ X \in \cR $ be an invertible element such that
the inner automorphism $ \Int X $ belongs to the stabilizer of the grading,
that is,
if $Y$ is a homogeneous element of $\cR$,
then $ ( \Int X ) (Y) = XYX^{-1} $ is also homogeneous of the same degree.
Then any nonzero homogeneous component $X_g$ of $X$ is invertible,
and determines the same inner automorphism as $X$,
that is,
$ \Int X_g = \Int X $.
\end{theorem}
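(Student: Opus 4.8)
The plan is to extract from the stabilizer hypothesis a twisted-commutation relation between each homogeneous component $X_g$ and the automorphism $\Int X$, and then to exploit graded-simplicity by turning the one-sided ideal generated by $X_g$ into a two-sided one. Write $\phi := \Int X$, so that the hypothesis becomes the identity $\phi(Y)X = XY$ for all $Y \in \cR$. First I would fix a homogeneous $Y$ of degree $d$ and compare homogeneous components of the two sides degree by degree. The crucial point is that $G$ is abelian and that $\phi$ preserves degrees: $\phi(Y)X_g$ and $X_g Y$ both lie in degree $dg = gd$, so matching the component of degree $dg$ (where only the single term $\phi(Y)X_g$ survives on the left) yields
$$\phi(Y) X_g = X_g Y \qquad \text{for every } g \in G,$$
and by linearity this holds for all $Y \in \cR$. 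Substituting $\phi^{-1}(Y)$ for $Y$, and using that $\phi$ is bijective, gives the companion relation $Y X_g = X_g \phi^{-1}(Y)$ for all $Y$.

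Next I would fix $g$ with $X_g \neq 0$ and consider $\cR X_g$. Since $X_g$ is homogeneous, this is a graded left ideal, and the first relation upgrades it to a right ideal as well: for every $Y$ one has $(\cR X_g)Y = \cR(X_g Y) = \cR\,\phi(Y)X_g \subseteq \cR X_g$. Hence $\cR X_g$ is a nonzero graded two-sided ideal, so graded-simplicity forces $\cR X_g = \cR$; in particular $a X_g = 1$ for some $a \in \cR$. Symmetrically, the companion relation shows $X_g \cR$ is a nonzero graded two-sided ideal, whence $X_g \cR = \cR$ and $X_g b = 1$ for some $b$. Then $a = a(X_g b) = (a X_g)b = b$, so $X_g$ is invertible. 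Finally, invertibility together with $\phi(Y)X_g = X_g Y$ gives, for all $Y$,
$$(\Int X_g)(Y) = X_g Y X_g^{-1} = \phi(Y) X_g X_g^{-1} = \phi(Y) = (\Int X)(Y),$$
that is, $\Int X_g = \Int X$.

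I expect the main obstacle to be precisely the middle step: graded-simplicity only constrains two-sided ideals, so the naive attempt to invert $X_g$ via the principal left ideal $\cR X_g$ stalls until one notices that the twisted-commutation relation is exactly what promotes $\cR X_g$ (and likewise $X_g\cR$) to a two-sided ideal. Once this observation is in place, both the invertibility of $X_g$ and the equality of the inner automorphisms follow immediately. A secondary point to handle carefully is the degree-matching argument: it must be stated so that it genuinely uses both the commutativity of $G$ and the degree-preservation of $\phi$, ensuring that a single term isolates in degree $dg$; the alternative route through the center of $\cR$ (observing that $X^{-1}X_g$ is central) is available but is not needed and does not by itself give invertibility, since the center of a graded-simple algebra is only a graded field and need not invert non-homogeneous elements.
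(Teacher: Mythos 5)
Your proof is correct. The first step (extracting $\phi(Y)X_g = X_g Y$ by comparing the degree-$dg$ components of $\phi(Y)X = XY$, using that $G$ is abelian and $\phi$ preserves degrees) is exactly the paper's first paragraph. Where you diverge is in proving that $X_g$ is invertible. The paper sets $Z = X^{-1}X_g$, shows $Z$ is central, passes to a homogeneous component of $Z$ (central again because the grading group is abelian) to produce a nonzero \emph{homogeneous central} element $W_0 X_g$, and then invokes graded-simplicity on the two-sided ideal $(W_0X_g)\cR$; invertibility of $X_g$ follows since $W_0$ and $X_g$ commute. You instead apply graded-simplicity directly to $\cR X_g$ and $X_g\cR$, observing that the twisted commutation relation $\phi(Y)X_g = X_g Y$ (and its companion $YX_g = X_g\phi^{-1}(Y)$) promotes each of these principal one-sided graded ideals to a two-sided ideal, yielding a left and a right inverse that must coincide. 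Your route is somewhat more direct and avoids the detour through the center entirely; the paper's route has the side benefit of exhibiting $X^{-1}X_g$ as a central element, which feeds naturally into the remark that follows the theorem (identifying the inner part of the stabilizer with $\cR_{\hom}^{\times}/(Z(\cR)\cap\cR_{\hom}^{\times})$). One small caveat on your closing aside: the paper's center-based argument does deliver invertibility, precisely because it passes to a \emph{homogeneous} component of the central element before applying graded-simplicity, so your criticism of that route applies only to a naive version of it.
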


\begin{proof}
For this first paragraph of the proof we follow Elduque's ideas from
\cite[lemma 10]{BKR2018a} or \cite[lemma 3.3]{Eld2010}.
Write $ \psi = \Int X $,
and note that $ \psi(Y)X = XY $ for all $ Y \in \cR $.
If $ Y \in \cR $ is homogeneous of degree $h$,
so it is $\psi(Y)$.
Since $G$ is abelian,
if we consider the component of degree $gh$
in the equation $ \psi(Y)X = XY $,
we get $ \psi(Y) X_g = X_g Y $ for all $ Y \in \cR_h $.
Therefore $ \psi(Y) X_g = X_g Y $ for all $ Y \in \cR $.
It remains to be shown that $X_g$ is invertible.

\medskip

Since $X$ is invertible,
there exists $ Z \in \cR $ such that $ X_g = XZ $.
For all $ Y \in \cR $
we have $ \psi(Y) X_g = X_g Y $,
so $ XYZ = XYX^{-1}XZ = XZY $,
hence $ YZ = ZY $.
Therefore $Z$ belongs to the center of $\cR$.

We have $ X^{-1} X_g = Z $,
and since $G$ is abelian,
we get that the homogeneous components of $ X^{-1} X_g $
also belong to the center.
So there exists $ W_0 \in \cR $ such that
$ W_0 X_g $ is a nonzero homogeneous central element.
In fact $ X^{-1} X_g = X_g X^{-1} $,
hence $ W_0 X_g = X_g W_0 $.

Now $\cR$ is graded-simple so,
by the graded version of Schur's lemma,
the nonzero homogeneous central element $ W_0 X_g $ is invertible.
Indeed,
$ ( W_0 X_g ) \cR $ is a graded two-sided ideal,
so $ ( W_0 X_g ) \cR = \cR $,
hence $ W_0 X_g $ is invertible.
Since $X_g$ and $W_0$ commute,
$X_g$ has both a left inverse and a right inverse,
and so $X_g$ is invertible.
\end{proof}

\begin{remark}
Recall that two invertible elements $X$ and $Y$
of a unital associative algebra $\cR$
define the same inner automorphism
if and only if
their quotient $ XY^{-1} $ belongs to
the center of the algebra $Z(\cR)$.
Recall also that,
if $\cR$ is endowed with a grading by an abelian group,
then the conjugation by any fixed homogeneous invertible element
defines an inner automorphism
that always belongs to the stabilizer of the grading.
Therefore if $\cR$ is graded-simple,
then theorem \ref{th:inn_aut} implies that
the group of inner automorphisms of $\cR$
that belong to the stabilizer of the grading
is isomorphic to
$ \cR_{\hom}^{\times} / ( Z(\cR) \cap \cR_{\hom}^{\times} ) $,
where $\cR_{\hom}^{\times}$ is the multiplicative group
of homogeneous invertible elements of $\cR$.
\end{remark}

\begin{example}
The graded-simple condition can not be removed
from the hypotheses of theorem \ref{th:inn_aut},
as the following counterexample shows.
Consider the decomposition of the real algebra
$ \HH \times \HH $
into the direct sum of seven subspaces
given by:
\begin{align}\label{eq:grad_HxH}
\HH \times \HH = {}
&
[ \RR (1,0) \oplus \RR (0,1) ] \oplus {}
\nonumber \\ &
\RR (i,0) \oplus
\RR (j,0) \oplus
\RR (k,0) \oplus {}
\nonumber \\ &
\RR (0,i) \oplus
\RR (0,j) \oplus
\RR (0,k)
\end{align}
Equation \eqref{eq:grad_HxH} defines a grading
by the abelian group $ \ZZ_2^2 \times \ZZ_2^2 $
in a natural way:
the two-sided ideal $ \HH \times \{ 0 \} $
is a graded subspace with support $ \ZZ_2^2 \times \{ e \} $,
and the two-sided ideal $ \{ 0 \} \times \HH $
is a graded subspace with support $ \{ e \} \times \ZZ_2^2 $.
In particular $ \HH \times \HH $ is not graded-simple.
If we take $ X = (i,i) $,
then the inner automorphism $ \Int X $
belongs to the stabilizer of the grading.
However the center of the algebra,
$ Z( \HH \times \HH ) =
\RR (1,0) \oplus \RR (0,1) $,
contains all the homogeneous elements of $ \HH \times \HH $
that are invertible.
\end{example}

\section{Fine gradings}\label{sect:fine_grad}

The notation that we introduce in this section
will also be used in section \ref{sect:aut_groups}.
Let $G$ be an abelian group,
$\FF$ a field,
and $\cR$ a $G$-graded associative $\FF$-algebra.
Assume that $\cR$ is graded-simple
and satisfies the descending chain condition on graded left ideals.
Then by \cite[theorem 2.6]{EK2013},
$\cR$ is isomorphic to $ \End_{\cD} (\cV) $,
where $\cD$ is a $G$-graded associative $\FF$-algebra
which is a graded-division algebra,
and $\cV$ is a $G$-graded right $\cD$-module
which is finite-dimensional over $\cD$.
Call $\Gamma$ the grading on $\cR$,
$\Gamma_0$ the division grading on $\cD$,
$ T \subseteq G $ the support of $\Gamma_0$,
and $\cD_{\hom}$ the multiplicative group
of nonzero homogeneous elements of $\cD$.

By \cite[section 2.1]{EK2013},
we can write $ \cV = \cV_1 \oplus \dots \oplus \cV_s $,
where the $\cV_i$ are the isotypic components of $\cV$.
Each of these components is determined by
its support $ g_i T \in G/T $
and its $\cD$-dimension $k_i$.
Write $ \gamma = (g_1,\dots,g_s) $,
$ \kappa = (k_1,\dots,k_s) $,
and $ k = k_1 + \dots + k_s $.
Recall that $ g_i \not\equiv g_j \pmod{T} $ for all $ i \neq j $.

\begin{definition}\label{def:fine_cond}
We say that $(\kappa,\gamma)$ satisfy the \emph{fine condition} if
$ k_1 = \dots = k_s = 1 $ and
$ g_i g_j^{-1} \not\equiv g_h g_{\ell}^{-1} \pmod{T} $
for all $ i \neq j $ and $ (i,j) \neq (h,\ell) $.
\end{definition}

Remark that, taking $h=\ell$,
the fine condition implies that
$ g_i g_j^{-1} \not\in T $ for all $ i \neq j $.
It also implies that $k=s$.

\medskip

Clearly if the grading $\Gamma$ on $\cR$ is fine,
then $(\kappa,\gamma)$ satisfy the fine condition and
the grading $\Gamma_0$ on $\cD$ is fine.
The main purpose of this section is to prove the converse
in theorem \ref{th:fine_grad}.
So we assume that $(\kappa,\gamma)$ satisfy the fine condition,
but we do not need to assume that the division grading $\Gamma_0$ is fine yet.

\medskip

We fix a homogeneous $\cD$-basis $(v_1,\dots,v_k)$ in $\cV$
such that $ \deg v_i = g_i $ for all $i$.
Then there is a bijective correspondence
between $ r \in \cR $ and $ X = (x_{ij}) \in M_k(\cD) $
given by $ r v_j = \sum_i v_i x_{ij} $.
It is also convenient to identify $M_k(\cD)$
with $ M_k(\FF) \otimes \cD $
via Kronecker product.
Thus the grading on $ M_k(\FF) \otimes \cD $ is given by:
\begin{equation}\label{eq:deg_Eij}
\deg ( E_{ij} \otimes d ) = g_i g_j^{-1} \deg d
\end{equation}

\begin{lemma}\label{lem:univ_group}
The universal abelian group of the grading $\Gamma$
is $ \ZZ^{k-1} \times T $.
\end{lemma}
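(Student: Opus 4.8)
The plan is to compute $U(\Gamma)$ directly from its presentation: it is the abelian group generated by symbols $u_g$, one for each $g$ in the support of $\Gamma$, subject to the relations $u_g u_{g'} = u_{gg'}$ whenever $\cR_g \cR_{g'} \neq 0$. Thus everything reduces to describing the homogeneous components of $\cR \cong M_k(\FF) \otimes \cD$ and then listing the nonzero products of components. Using \eqref{eq:deg_Eij} together with the fine condition (which forces $k = s$ and makes the cosets $g_i g_j^{-1} T$, for $i \neq j$, pairwise distinct and distinct from $T$), I would first verify that
\[
\cR_t = \bigoplus_{i=1}^{k} E_{ii} \otimes \cD_t \quad (t \in T), \qquad \cR_{g_i g_j^{-1} t} = E_{ij} \otimes \cD_t \quad (i \neq j,\ t \in T),
\]
and that these account for the whole support, which is therefore the disjoint union $T \sqcup \bigsqcup_{i \neq j} g_i g_j^{-1} T$. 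In particular, writing $\xi_{ij,t}$ for the generator $u_{g_i g_j^{-1} t}$, the diagonal indices collapse: $\xi_{ii,t} = u_t$ for every $i$.

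Because $\cD$ is a graded-division algebra, the product of any two nonzero homogeneous elements is nonzero, so $(E_{ij} \otimes d)(E_{j\ell} \otimes d') = E_{i\ell} \otimes dd'$ is a nonzero homogeneous element for all $d \in \cD_t$, $d' \in \cD_{t'}$; these are exactly the nonzero products of homogeneous components. Hence the defining relations of $U(\Gamma)$ all take the single uniform form
\[
\xi_{ij,t}\,\xi_{j\ell,t'} = \xi_{i\ell,\,tt'} \qquad (1 \le i,j,\ell \le k,\ t,t' \in T).
\]
I would then establish $U(\Gamma) \cong \ZZ^{k-1} \times T$ by producing mutually inverse homomorphisms. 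Let $c_1, \dots, c_{k-1}$ freely generate $\ZZ^{k-1}$, set $c_k := 1$, and write $f_t$ for the image of $t$ in $T$. The assignment $\xi_{ij,t} \mapsto c_i c_j^{-1} f_t$ respects the displayed relations and hence defines a homomorphism $U(\Gamma) \to \ZZ^{k-1} \times T$. Conversely, $c_i \mapsto \xi_{ik,e}$ $(i < k)$ and $f_t \mapsto \xi_{11,t}$ defines a homomorphism in the other direction, the latter being well defined because $\xi_{11,t}\xi_{11,t'} = \xi_{11,tt'}$. A short manipulation of the uniform relation yields the identity $\xi_{ij,t} = \xi_{ik,e}\,\xi_{jk,e}^{-1}\,\xi_{11,t}$ in $U(\Gamma)$ (first $\xi_{ik,e}\xi_{kj,e} = \xi_{ij,e}$ and $\xi_{jk,e}\xi_{kj,e} = \xi_{jj,e} = u_e$, then $\xi_{ij,e}\xi_{jj,t} = \xi_{ij,t}$), from which one checks that the two homomorphisms are inverse to each other.

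The step I expect to be the main obstacle is justifying that the relations $\xi_{ij,t}\xi_{j\ell,t'} = \xi_{i\ell,tt'}$ really are a complete set of defining relations, with no hidden coincidences among the components. This is precisely where both hypotheses enter: the graded-division property of $\cD$ guarantees that every such product is nonzero, so none of these relations is absent, while the fine condition guarantees that distinct off-diagonal pairs $(i,j)$ lie in distinct cosets of $T$, so that no two of the generators $\xi_{ij,t}$ are accidentally identified (such an identification would impose extra relations and collapse the free factor $\ZZ^{k-1}$). Once this is settled, the verification that the two displayed maps compose to the identity in both directions is routine. As a final consistency check I would note that the canonical map $U(\Gamma) \to G$ sends $c_i \mapsto g_i g_k^{-1}$ and $f_t \mapsto t$, so it recovers the original grading and exhibits $\ZZ^{k-1} \times T$ surjecting onto $\langle g_i g_j^{-1},\, T \rangle$, as it must.
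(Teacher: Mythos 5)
Your proof is correct and takes essentially the same route as the paper's: the fine condition pins down the homogeneous components as $\cR_t=\bigoplus_i E_{ii}\otimes\cD_t$ and $\cR_{g_ig_j^{-1}t}=E_{ij}\otimes\cD_t$, the graded-division property of $\cD$ guarantees that all products $\xi_{ij,t}\xi_{j\ell,t'}=\xi_{i\ell,tt'}$ are genuine relations, and from this presentation one reads off $\ZZ^{k-1}\times T$. You merely make explicit (via the mutually inverse homomorphisms) what the paper compresses into ``admits a realization by $\ZZ^{k-1}\times T$ and the relations in $T$ are imposed.''
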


\begin{proof}
Since $ g_i g_j^{-1} \not\equiv g_h g_{\ell}^{-1} \pmod{T} $
for all $ i \neq j $ and $ (i,j) \neq (h,\ell) $,
a homogeneous element of degree $ g_i g_j^{-1} t $
is necessarily of the form $ E_{ij} \otimes d $,
with $ \deg d = t \in T $.
Therefore the product of
a homogeneous element of degree $ g_i g_j^{-1} t_1 $
times a homogeneous element of degree $ g_h g_{\ell}^{-1} t_2 $
(where $ t_1,t_2 \in T $)
can be nonzero only if
either $i=j$ or $h=\ell$ or $j=h$.
So the grading $\Gamma$ admits a realization
by the group $ \ZZ^{k-1} \times T $.
Moreover,
the relations in $T$ are imposed by the grading $\Gamma$
because $\cD$ is a graded-division algebra
(consider for example $ E_{11} \otimes d_1 $ times $ E_{11} \otimes d_2 $),
so $ \ZZ^{k-1} \times T $ is
the universal abelian group of the grading $\Gamma$.
\end{proof}

Now we recall some properties about
homogeneous idempotents and graded left ideals.
Any homogeneous idempotent has degree $e$,
where $e$ is the neutral element of the group $G$.
By \cite[lemma 2.7]{EK2013}
any minimal graded left ideal of $\cR$
is generated by a homogeneous idempotent.

\begin{definition}
A \emph{primitive} homogeneous idempotent is
a nonzero homogeneous idempotent $ \varepsilon \in \cR $ such that
$ \cR \varepsilon $ is a minimal graded left ideal of $\cR$.
\end{definition}

Besides let us prove that
a nonzero homogeneous idempotent $ \varepsilon \in \cR $
is \emph{not} primitive
if and only if
there exist nonzero homogeneous idempotents $\delta$ and $\mu$
such that $ \varepsilon = \delta + \mu $
and $ \delta \mu = \mu \delta = 0 $.
The implication from right to left is clear,
moreover $ \cR \varepsilon = \cR \delta \oplus \cR \mu $.
For the implication from left to right take
a homogeneous idempotent $\rho$
such that $ 0 \neq \cR \rho \subsetneqq \cR \varepsilon $,
and check that we can take $ \delta = \varepsilon \rho $.

\begin{remark}\label{rem:prim_idemp}
There are $2^k$ homogeneous idempotents on
$ \cR \cong M_k(\FF) \otimes \cD $,
and exactly $k$ of them are primitive
($ E_{11} \otimes 1 , \dots , E_{kk} \otimes 1 $).
In fact $\cR$ is the direct sum of the $k$ minimal graded left ideals
generated by these primitive idempotents.
\end{remark}

\begin{theorem}\label{th:fine_grad}
The grading $\Gamma$ on $\cR$ is fine
in the class of abelian group gradings
if and only if
$(\kappa,\gamma)$ satisfy the fine condition and
the grading $\Gamma_0$ on $\cD$ is fine
in the class of abelian group gradings.
\end{theorem}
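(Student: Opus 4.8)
The plan is to prove the nontrivial direction: assuming $(\kappa,\gamma)$ satisfy the fine condition and $\Gamma_0$ is fine on $\cD$, show that $\Gamma$ is fine on $\cR$. The converse is already observed to be clear in the text. The standard way to show a grading is fine in the class of abelian group gradings is to show that it equals its own coarsening-refinement fixed point, or equivalently, that any proper refinement would collapse back; the cleanest route is to show that the grading already realizes its universal abelian group and that no homogeneous component can be split any further.

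First I would invoke Lemma~\ref{lem:univ_group}, which tells us that the universal abelian group of $\Gamma$ is $\ZZ^{k-1} \times T$. I would then recall the general principle that a grading by an abelian group is fine if and only if it cannot be properly refined, and that refinements correspond to the universal group. The key structural fact to exploit is equation~\eqref{eq:deg_Eij}: under the fine condition, each homogeneous component of $\cR$ is of the form $E_{ij} \otimes \cD_t$, where $\cD_t$ is a single homogeneous component of $\cD$. This is precisely the content established in the proof of Lemma~\ref{lem:univ_group}: a homogeneous element of degree $g_i g_j^{-1} t$ must be $E_{ij} \otimes d$ with $\deg d = t$. So the homogeneous components of $\Gamma$ are in bijection with pairs $(E_{ij}, \cD_t)$.

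The heart of the argument is then to show that no such component $E_{ij} \otimes \cD_t$ can be refined. Since $\Gamma_0$ is assumed fine on $\cD$, each $\cD_t$ is a homogeneous component of a \emph{fine} grading, hence cannot be split into smaller graded pieces by any coarser-group argument. I would formalize this by supposing $\Gamma'$ is a proper refinement of $\Gamma$ by an abelian group and deriving a contradiction: the refinement would induce a refinement on the subalgebra $E_{ii} \otimes \cD \cong \cD$ (for fixed $i$), contradicting fineness of $\Gamma_0$, unless the refinement only redistributes the off-diagonal pieces. But the off-diagonal components $E_{ij} \otimes \cD_t$ are linked to the diagonal ones through the multiplication $(E_{ij} \otimes d_1)(E_{ji} \otimes d_2) = E_{ii} \otimes d_1 d_2$, so any splitting of an off-diagonal homogeneous component forces a corresponding splitting on the diagonal, which is again forbidden. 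The fine condition guarantees these products land in predictable components and that the degree map $(i,j,t) \mapsto g_i g_j^{-1} t$ is injective, so no two distinct components accidentally merge and no component secretly decomposes.

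The main obstacle I anticipate is making rigorous the reduction from "a refinement of $\Gamma$" to "a refinement of $\Gamma_0$". One must argue that the embedding $\cD \hookrightarrow \cR$ as $E_{11} \otimes \cD$ is compatible with any refinement, i.e., that a homogeneous basis adapted to $\Gamma'$ restricts to a homogeneous basis of $\cD$ adapted to an induced grading $\Gamma_0'$ that refines $\Gamma_0$. The subtlety is that an abstract refinement $\Gamma'$ need not a priori respect the matrix decomposition, so I would first need to verify that $\Gamma'$ still has each $E_{ij} \otimes 1$ homogeneous (this follows because the $k$ primitive idempotents of Remark~\ref{rem:prim_idemp} are canonical—they are exactly the primitive homogeneous idempotents, and refinement preserves homogeneity of idempotents, hence these remain homogeneous of degree $e$). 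Once the matrix units are pinned down, the refinement is forced to act only on the $\cD$-factor, and fineness of $\Gamma_0$ closes the argument.
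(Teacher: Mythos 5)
Your overall strategy matches the paper's: invoke Lemma~\ref{lem:univ_group}, pin down the matrix units under a putative refinement $\Gamma'$, and reduce to fineness of $\Gamma_0$ on the diagonal blocks. But the step you yourself flag as the ``main obstacle'' is exactly where the argument breaks, and the justification you give for it is false. You claim the primitive homogeneous idempotents $E_{ii}\otimes 1$ of $\Gamma$ remain homogeneous in $\Gamma'$ ``because refinement preserves homogeneity of idempotents.'' Homogeneity only passes in the other direction: every $\Gamma'$-homogeneous element is $\Gamma$-homogeneous, but a $\Gamma$-homogeneous idempotent merely lies in the identity component of $\Gamma$, which a refinement may split, so the idempotent need not lie in a single $\Gamma'$-component. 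The paper's own $M_2(\RR)$ example exhibits this failure: $E_{11}=\tfrac12(I+X_a)$ is a primitive homogeneous idempotent for the coarse $\ZZ_2$-grading but is not homogeneous for the refined $\ZZ_2^2$ division grading. So the primitive homogeneous idempotents of $\Gamma$ and of $\Gamma'$ are not canonically the same set; that identification is precisely what has to be proved, and your proposal assumes it.

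The paper closes this gap with an argument you are missing: it first refines $\Gamma'$ further so that its own parameters $(\kappa',\gamma')$ satisfy the fine condition (so that Lemma~\ref{lem:univ_group} and Remark~\ref{rem:prim_idemp} apply to $\Gamma'$ as well), and then squeezes $k'=k$ from two sides --- the torsion-free rank of the universal group of $\Gamma'$ is at least $k-1$, forcing $k'\geq k$, while every minimal graded left ideal for $\Gamma'$ decomposes into minimal graded left ideals for $\Gamma$, forcing $k'\leq k$. Only after this do the primitive idempotents of the two gradings coincide. There is a second, smaller gap: even with the idempotents pinned down, ``the refinement acts only on the $\cD$-factor'' does not by itself rule out $\Gamma'$ assigning different degrees to $E_{ii}\otimes\cD_t$ and $E_{hh}\otimes\cD_t$ for $i\neq h$, which both sit inside the single $\Gamma$-component of degree $t$. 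The paper excludes this using homogeneous matrix units $\idem_{ih}$ for $\Gamma'$ (obtained from the structure theorem $\cR\cong M_k(\cD')$ applied to $\Gamma'$, not merely from the idempotents) together with the isomorphism $X\mapsto\idem_{hi}X\idem_{ih}$, or, in the finite-support case, simply by counting homogeneous components. Your multiplication-linkage idea is in the right spirit for this last step, but it presupposes the $\Gamma'$-homogeneity of the off-diagonal matrix units, which again requires the structure theorem for $\Gamma'$ rather than an appeal to canonicity.
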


\begin{proof}
We have to prove the implication from right to left.
We assume that $(\kappa,\gamma)$ satisfy the fine condition,
but we do not need to assume that the division grading $\Gamma_0$ is fine yet.
Suppose that $\Gamma'$ is a refinement of the grading $\Gamma$,
and refining $\Gamma'$ if necessary,
we may also assume that the $(\kappa',\gamma')$ corresponding to $\Gamma'$
satisfy the fine condition,
and so lemma \ref{lem:univ_group} and remark \ref{rem:prim_idemp}
also apply to $\Gamma'$.

On the one hand,
if the universal abelian group of $\Gamma'$ is $U'$,
then the torsion-free rank of $U'$ is at least $k-1$,
so $ k' \geq k $.
On the other hand,
if $I$ is a minimal graded left ideal for $\Gamma'$,
so $I$ is generated by a homogeneous idempotent,
then $I$ is also graded for $\Gamma$,
and it is the direct sum of minimal graded left ideals for $\Gamma$.
Therefore $\cR$ is the direct sum of
$k'$ minimal graded left ideals for $\Gamma'$,
with $ k' \leq k $.
We conclude that $k=k'$,
and that the primitive homogeneous idempotents of $\Gamma$ and $\Gamma'$ coincide.

\medskip

By \cite[theorem 2.6 and lemma 2.8]{EK2013},
if $\idem$ is a primitive homogeneous idempotent of $\cR$,
both the graded-division algebra $\cD$
and its analogous $\cD'$ for $\Gamma'$
are isomorphic to $ \idem \cR \idem $.
Hereafter we assume that
the division grading $\Gamma_0$ on $\cD$ is fine,
hence the gradings $\Gamma$ and $\Gamma'$
coincide in $ \idem \cR \idem $.

\medskip

In the case of finite support,
both the number of homogeneous components of $\Gamma$ and $\Gamma'$
are $(k^2-k+1)|T|$,
so the refinement $\Gamma'$ is not a proper refinement of $\Gamma$.

\medskip

Now we finish the proof in the case of infinite support.
Since $\cR$ is isomorphic to $M_k(\cD')$,
we know that there exist homogeneous elements
$ \{ \idem_{ij} \mid 1 \leq i,j \leq k \} $
in $\cR$ such that
$ \idem_{ij} \idem_{ h \ell } = \delta_{jh} \idem_{ i \ell } $,
where $\delta_{jh}$ is $1$ if $j=h$ and $0$ otherwise.
Since the elements $\idem_{ij}$ are homogeneous in $\Gamma'$,
they are also homogeneous in $\Gamma$.
Necessarily $ \idem_{11} , \dots , \idem_{kk} $
are the primitive homogeneous idempotents,
and $ 1 = \idem_{11} + \dots + \idem_{kk} $.
We have a decomposition of $\cR$
as a direct sum of graded subspaces:
\begin{equation}
\cR = \bigoplus_{i,j} \idem_{ii} \cR \idem_{jj}
\end{equation}

Suppose that there exist
a nonzero homogeneous element in
$ \idem_{ii} \cR \idem_{jj} $ and
a nonzero homogeneous element in
$ \idem_{hh} \cR \idem_{ \ell \ell } $
whose degrees are different in $\Gamma'$ but equal in $\Gamma$,
and we will get a contradiction.
The fine condition implies that either $i=j$ and $ h = \ell $,
or $ i \neq j $ and $ (i,j) = (h,\ell) $.
In the first case,
we can assume that $i=h$ because
$ \idem_{ii} \cR \idem_{ii} $ and $ \idem_{hh} \cR \idem_{hh} $
are isomorphic via the isomorphism
$ X \in \idem_{ii} \cR \idem_{ii} \mapsto
\idem_{hi} X \idem_{ih} \in \idem_{hh} \cR \idem_{hh} $.
But this is a contradiction because we know that
the gradings $\Gamma$ and $\Gamma'$
coincide in $ \idem_{ii} \cR \idem_{ii} $.
In the second case,
both elements belong to the graded subspace
$ \idem_{ii} \cR \idem_{jj} $,
but $\Gamma$ and $\Gamma'$ also coincide in this subspace,
because it is equal to
$ \idem_{ij} \idem_{jj} \cR \idem_{jj} $,
another contradiction.
\end{proof}

\begin{remark}\label{rem:squares}
Since $(\kappa,\gamma)$ satisfy the fine condition,
given $ g \in \supp \Gamma $ there are two options:
\begin{itemize}
\item
If $ g \in T $,
then $ r^2 \neq 0 $ for all $ 0 \neq r \in \cR_g $.
\item
If $ g \notin T $,
then $ r^2 = 0 $ for all $ r \in \cR_g $.
\end{itemize}
\end{remark}

Let us speak about the development of theorem \ref{th:fine_grad}.
The first version of this result is due to Mikhail Kochetov,
who proved it in the case in which
the ground field is the field of real numbers and
the underlying algebra (disregarding the grading)
is simple and of finite dimension;
the techniques of this proof are totally different
from those that we have used.
Later on we proved the result in the general case,
with the hypotheses that we have stated at the beginning of the section.
Finally Alberto Elduque simplified the proof:
originally we used remark \ref{rem:squares}
instead of lemma \ref{lem:univ_group}
in order to prove that
the primitive homogeneous idempotents of $\Gamma$ and $\Gamma'$ coincide.
Let us state our arguments in the next two paragraphs,
because its techniques may be interesting.

\begin{proof}[Alternative proof]
Assume that a primitive homogeneous idempotent $\varepsilon$ for $\Gamma'$
can be further decomposed as $ \varepsilon = \delta + \mu $,
where $\delta$ is a primitive homogeneous idempotent for $\Gamma$,
$\mu$ is a nonzero homogeneous idempotent for $\Gamma$,
and $ \delta \mu = \mu \delta = 0 $.
By \cite[theorem 2.6 and lemma 2.8]{EK2013},
we may identify $\cD'$ with $ \varepsilon \cR \varepsilon $
and $\cD$ with $ \delta \cR \delta $,
so that $\cD$ is contained in $\cD'$
(in fact $ \varepsilon \cR \varepsilon =
\delta \cR \delta \oplus
\delta \cR \mu \oplus
\mu \cR \delta \oplus
\mu \cR \mu $).
In the next paragraph we are going to prove that $ \delta \cR \mu = 0 $,
therefore $\mu$ does not belong to the (graded) ideal generated by $\delta$,
which is a contradiction because $\cR$ is graded-simple.

Suppose that $ \delta \cR \mu \neq 0 $,
so there exists a nonzero homogeneous element in $ \delta \cR \mu $
of degree $ g \in \supp \Gamma $.
Since $ ( \delta \cR \mu )^2 = 0 $,
remark \ref{rem:squares} implies that $ g \notin T $.
Observe that $ \delta \cR \mu $ is contained in $ \varepsilon \cR \varepsilon $.
Applying again remark \ref{rem:squares} we conclude that
the square of any homogeneous element in $ \varepsilon \cR \varepsilon $
of degree $ g \in \supp \Gamma $ is zero.
However we can find a nonzero element in $ \varepsilon \cR \varepsilon $
such that it is homogeneous in $\Gamma'$
(so it is also homogeneous in $\Gamma$)
and such that its degree in $\Gamma$ is $g$;
but the square of this element is different from zero,
because $\cD'$ is a graded-division algebra,
contradiction.
\end{proof}

\begin{example}
Let us show an example that helped us
to visualize the proof of theorem \ref{th:fine_grad}.
Consider the following eight matrices of $M_2(\RR)$:
\begin{equation}
I = \begin{pmatrix} 1 & 0 \\ 0 & 1 \end{pmatrix}
\quad
X_a = \begin{pmatrix} 0 & 1 \\ 1 & 0 \end{pmatrix}
\quad
X_b = \begin{pmatrix} -1 & 0 \\ 0 & 1 \end{pmatrix}
\quad
X_c = \begin{pmatrix} 0 & -1 \\ 1 & 0 \end{pmatrix}
\end{equation}
\begin{equation}
E_{11} = \frac{1}{2} \begin{pmatrix} 1 & 1 \\ 1 & 1 \end{pmatrix}
\quad
E_{12} = \frac{1}{2} \begin{pmatrix} -1 & 1 \\ -1 & 1 \end{pmatrix}
\end{equation}
\begin{equation}
E_{21} = \frac{1}{2} \begin{pmatrix} -1 & -1 \\ 1 & 1 \end{pmatrix}
\quad
E_{22} = \frac{1}{2} \begin{pmatrix} 1 & -1 \\ -1 & 1 \end{pmatrix}
\end{equation}
We define a grading on the real algebra $M_2(\RR)$
by the group $\ZZ_2$ by means of:
\begin{align}
M_2(\RR) &
= [ \RR I \oplus \RR X_a ] \oplus [ \RR X_b \oplus \RR X_c ]
\nonumber \\ &
= [ \RR E_{11} \oplus \RR E_{22} ] \oplus [ \RR E_{12} \oplus \RR E_{21} ]
\end{align}
There are two ways to refine this grading
(see \cite[example 2.40]{EK2013}).
On the one hand we can construct
a division grading by the group $\ZZ_2^2$
by means of:
\begin{equation}
M_2(\RR) =
\RR I \oplus \RR X_a \oplus \RR X_b \oplus \RR X_c
\end{equation}
On the other hand we can construct
a grading by the group $\ZZ$
by means of:
\begin{equation}
M_2(\RR) =
\RR E_{21} \oplus [ \RR E_{11} \oplus \RR E_{22} ] \oplus \RR E_{12}
\end{equation}
\end{example}

\begin{remark}\label{rem:clas_equiv}
Suppose that $ \cR \cong M_k(\cD) $ and
$ \cR' \cong M_{k'}(\cD') $ are graded algebras
as described at the beginning of the section,
with their corresponding parameters
$(\kappa,\gamma)$ and $(\kappa',\gamma')$
satisfying the fine condition.
Then by \cite[proposition 2.33]{EK2013}
the graded algebras $\cR$ and $\cR'$ are equivalent
if and only if $ k = k' $ and
the graded-division algebras $\cD$ and $\cD'$ are equivalent.
Therefore the classification,
up to equivalence,
of fine gradings by abelian groups
such that the algebra is graded-simple
and satisfies the descending chain condition on graded left ideals
is reduced to
the classification of fine division gradings.
\end{remark}

\begin{example}
Let us enumerate all fine abelian group gradings
on the real algebra $M_4(\CC)$,
up to equivalence.
Recall that the division gradings on this real algebra
are classified in \cite{BZ2016} or \cite{Rod2016},
here we follow the list of \cite[section 3]{Rod2016}.
\begin{enumerate}
\item
$M_4(\cD)$ with $ \cD \cong \CC $ of type (1-c).
The universal abelian group is
$ \ZZ^3 \times \ZZ_2 $.
\item
$M_2(\cD)$ with $ \cD \cong M_2(\CC) $ of type (1-c).
The universal abelian group is
$ \ZZ \times \ZZ_2^3 $.
\item
$M_2(\cD)$ with $ \cD \cong M_2(\CC) $ of type (1-d).
The universal abelian group is
$ \ZZ \times \ZZ_2 \times \ZZ_4 $.
\item
$ \cD \cong M_4(\CC) $ of type (1-c).
The universal abelian group is
$ \ZZ_2^5 $.
\item
$ \cD \cong M_4(\CC) $ of type (1-d).
The universal abelian group is
$ \ZZ_2^3 \times \ZZ_4 $.
\item
$ \cD \cong M_4(\CC) $ of type (2-f)
and universal abelian group
$ \ZZ_4^2 $.
This is a complex grading.
\end{enumerate}
\end{example}

The difficulty of studying
gradings that are not necessarily fine
up to equivalence
is illustrated in \cite[example 2.41 and figure 2.2]{EK2013},
where the abelian group gradings
on the complex algebra $M_3(\CC)$
are classified up to equivalence;
there are two fine gradings,
but nine nontrivial gradings that are not necessarily fine.

\section{Automorphism groups}\label{sect:aut_groups}

In this section we are going to generalize
pages 45--49 of the monograph \cite{EK2013}
to the case in which the ground field $\FF$
is not necessarily algebraically closed.
So we are going to compute the automorphism groups of $\Gamma$.
In this analysis
we will use techniques similar to those in the monograph.
We continue with the notation that
we have introduced in section \ref{sect:fine_grad}.
Remark that we assume that $(\kappa,\gamma)$ satisfy the fine condition,
but we do not need to assume that the division grading $\Gamma_0$ is fine.

\medskip

Let $\Sym(k)$ be the symmetric group of degree $k$.
Recall that
the permutation matrix corresponding to $ \pi \in \Sym(k) $ is
the $ k \times k $ matrix $P(\pi)$ whose entries are
$1$ in the $ ( i , \pi^{-1} (i) ) $-th positions
and $0$ elsewhere.
Thus if $A$ is a $ k \times k $ matrix,
then the $i$-th row of $A$ equals
the $\pi(i)$-th row of $ P(\pi) A $,
and the $i$-th column of $A$ equals
the $ \pi^{-1} (i) $-th column of $ A P(\pi) $.
Hence $ P ( \pi \circ \sigma ) = P(\pi) P(\sigma) $
and $ ( P(\pi) )^{-1} = P ( \pi^{-1} ) $
for all $ \pi , \sigma \in \Sym(k) $.

\medskip

By \cite[proposition 2.33]{EK2013},
if $ \psi : \cR \to \cR $ is an equivalence of graded algebras,
then there exist
an equivalence of graded algebras $ \psi_0 : \cD \to \cD $
and an equivalence of graded vector spaces $ \psi_1 : \cV \to \cV $
such that for all $ r \in \cR $, $ d \in \cD $ and $ v \in \cV $
the following two equations are satisfied:
\begin{equation}\label{eq:equiv_D}
\psi_1(vd) = \psi_1(v) \psi_0(d)
\end{equation}
\begin{equation}\label{eq:equiv_R}
\psi_1(rv) = \psi(r) \psi_1(v)
\end{equation}
Besides,
another pair $ ( \psi_0' , \psi_1' ) $ satisfies
equations \eqref{eq:equiv_D} and \eqref{eq:equiv_R}
if and only if there exists $ d \in \cD_{\hom} $ such that
for all $ x \in \cD $ and $ v \in \cV $
we have:
\begin{equation}
\psi_0'(x) = d^{-1} \psi_0(x) d
\end{equation}
\begin{equation}
\psi_1'(v) = \psi_1(v) d
\end{equation}

\medskip

In section \ref{sect:fine_grad}
we fixed a homogeneous $\cD$-basis $(v_1,\dots,v_k)$ in $\cV$
such that $ \deg v_i = g_i $ for all $i$,
and so we obtained a bijective correspondence
between $ r \in \cR $ and $ X = (x_{ij}) \in M_k(\cD) $
given by $ r v_j = \sum_i v_i x_{ij} $.
Now we observe that
$ \psi (r) \psi_1 (v_j) = \sum_i \psi_1 (v_i) \psi_0 (x_{ij}) $,
that is,
the matrix of $ \psi(r) \in \cR $
in the basis $ ( \psi_1 (v_1) , \dots , \psi_1 (v_k) ) $
is $ \psi_0 (X) = ( \psi_0 (x_{ij}) ) \in M_k(\cD) $.
(Remark that $\psi_0$ acts on $X$ entrywise).
The matrix of change from the basis
$ ( \psi_1 (v_1) , \dots , \psi_1 (v_k) ) $
to the basis $(v_1,\dots,v_k)$
is $ \Psi = (d_{ij}) \in M_k(\cD) $,
where $ \psi_1(v_j) = \sum_i v_i d_{ij} $.
Therefore the bijective correspondence
sends $ \psi(r) \in \cR $
to $ \Psi \psi_0(X) \Psi^{-1} \in M_k(\cD) $.

Remark that $\psi_1$ is a graded map
and $ g_1 , \dots , g_k $ are different modulo $T$,
so in the sum $ \psi_1 (v_j) = \sum_i v_i d_{ij} $
only one of the coefficients $d_{ij}$ is different from zero.
Define $ \pi \in \Sym(k) $ so that
this nonzero coefficient is $ d_i := d_{ i \pi^{-1} (i) } $,
and note that $ d_i \in \cD_{\hom} $.
Then $ \Psi = DP $,
with $ D = \diag(d_1,\dots,d_k) $
and $ P = P(\pi) $.
Thus the bijective correspondence
sends $ \psi(r) \in \cR $
to $ D P \psi_0(X) P^{-1} D^{-1} \in M_k(\cD) $.

\medskip

Conversely,
suppose that we are given
a $ \psi_0 \in \Aut(\Gamma_0) $,
a $ k \times k $ permutation matrix $ P = P(\pi) $,
and a $ D = \diag(d_1,\dots,d_k) $
with $ d_i \in \cD_{\hom} $ for all $i$.
Then we can define an automorphism of $\FF$-algebras
$ M_k(\cD) \to M_k(\cD) $ by means of:
\begin{equation}
X \mapsto D P \psi_0(X) P^{-1} D^{-1}
\end{equation}
Recall that the degree of the generic homogeneous element
$ E_{ij} \otimes d \in M_k(\FF) \otimes \cD \cong M_k(\cD) $
is given by equation \eqref{eq:deg_Eij}.
Our map sends this element to the element
$ E_{ \pi(i) \pi(j) } \otimes d_{\pi(i)} \psi_0(d) d_{\pi(j)}^{-1} $,
which is homogeneous of degree:
\begin{align}\label{eq:deg_im_Eij}
\deg ( & E_{ \pi(i) \pi(j) } \otimes
d_{\pi(i)} \psi_0(d) d_{\pi(j)}^{-1} )
= \nonumber \\
& =
g_{\pi(i)} g_{\pi(j)}^{-1}
\deg d_{\pi(i)} ( \deg d_{\pi(j)} )^{-1} \deg \psi_0(d)
\end{align}
So this map is a graded map and,
since $(\kappa,\gamma)$ satisfy the fine condition,
it is also an equivalence of graded algebras.
Recall that the correspondence
between $ \psi \in \Aut(\Gamma) $
and $ ( (d_1,\dots,d_k) , \pi , \psi_0 )
\in (\cD_{\hom})^k \times \Sym(k) \times \Aut(\Gamma_0) $
is not one-to-one,
but we can only change $\psi_0$ by $ \Int(d^{-1}) \circ \psi_0 $,
and simultaneously change all $d_i$ by $ d_i d $,
where $ d \in \cD_{\hom}$.

\medskip

We will have a total description of the automorphism group $\Aut(\Gamma)$
if we see how this correspondence translates to the product.
Suppose that $\psi$ corresponds to $ ( D , \pi , \psi_0 ) $,
and $\psi'$ corresponds to $ ( D' , \pi' , \psi_0' ) $.
We define the action of $\pi$
on $ D = \diag(d_1,\dots,d_k) $ as
$ \pi(D) = \diag( d_{\pi^{-1}(1)} , \dots , d_{\pi^{-1}(k)} ) $,
that is,
the $i$-th element of $D$ equals
the $\pi(i)$-th element of $\pi(D)$.
Write $ P = P(\pi) $ and $ P' = P(\pi') $.
Then $ \psi \circ \psi' $ sends $X$ to:
\begin{align}
D P & \psi_0 ( D' P' \psi_0'(X) P'^{-1} D'^{-1} )
P^{-1} D^{-1} = \nonumber \\
& =
D P \psi_0(D') P'
\psi_0(\psi_0'(X))
P'^{-1} \psi_0(D')^{-1} P^{-1} D^{-1}
= \nonumber \\
& =
D \psi_0(\pi(D')) P P'
\psi_0(\psi_0'(X))
P'^{-1} P^{-1} \psi_0(\pi(D'))^{-1}  D^{-1}
\end{align}
Therefore $ \psi \circ \psi' $ corresponds to:
\begin{equation}
( D , \pi , \psi_0 ) * ( D' , \pi' , \psi_0' ) =
( D \cdot \psi_0(\pi(D')) , \pi \circ \pi' , \psi_0 \circ \psi_0' )
\end{equation}

\begin{proposition}\label{pro:stab}
The stabilizer of $\Gamma$ is isomorphic to:
\begin{equation}
\Stab(\Gamma) \cong ( \cD_e^{\times} )^{k-1} \rtimes \Stab(\Gamma_0)
\end{equation}
In this semidirect product,
the action of $\Stab(\Gamma_0)$
on $ ( \cD_e^{\times} )^{k-1} $
is componentwise.
\end{proposition}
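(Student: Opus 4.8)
The plan is to read off the stabilizer directly from the parametrization of $\Aut(\Gamma)$ by triples $(D,\pi,\psi_0) \in (\cD_{\hom})^k \times \Sym(k) \times \Aut(\Gamma_0)$ developed above, under which $\psi$ sends $X$ to $DP\psi_0(X)P^{-1}D^{-1}$. The key tool is equation \eqref{eq:deg_im_Eij}, which records the degree of the image of the generic homogeneous element $E_{ij}\otimes d$. An automorphism lies in $\Stab(\Gamma)$ precisely when this image degree equals the original degree $g_ig_j^{-1}\deg d$ for all $i,j$ and all homogeneous $d$, so my first task is to turn this requirement into conditions on $D$, $\pi$ and $\psi_0$.

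First I would treat the diagonal case $i=j$: the original degree is $\deg d$ while \eqref{eq:deg_im_Eij} gives image degree $\deg\psi_0(d)$, so the stabilizer condition forces $\deg\psi_0(d)=\deg d$ for every homogeneous $d$, i.e. $\psi_0\in\Stab(\Gamma_0)$. Next, for $i\neq j$ and using $\deg\psi_0(d)=\deg d$, the condition reduces to $g_ig_j^{-1}=g_{\pi(i)}g_{\pi(j)}^{-1}\,\deg d_{\pi(i)}(\deg d_{\pi(j)})^{-1}$. Since $\deg d_{\pi(i)},\deg d_{\pi(j)}\in T$, reading this modulo $T$ gives $g_ig_j^{-1}\equiv g_{\pi(i)}g_{\pi(j)}^{-1}\pmod T$; the fine condition says the classes $g_ag_b^{-1}\pmod T$ are pairwise distinct over ordered pairs $(a,b)$ with $a\neq b$, so $(\pi(i),\pi(j))=(i,j)$ and hence $\pi=\id$ (the case $k=1$ being trivial). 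Substituting $\pi=\id$ back then yields $\deg d_i=\deg d_j$ for all $i,j$, so all the $d_i$ share a common degree $t\in T$. Thus every element of $\Stab(\Gamma)$ is represented by a triple $(\diag(d_1,\dots,d_k),\id,\psi_0)$ with $\psi_0\in\Stab(\Gamma_0)$ and all $d_i\in\cD_{\hom}$ of the same degree.

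To pass to the stated semidirect product I would exploit the non-uniqueness of the representative, namely the freedom to replace each $d_i$ by $d_id$ and $\psi_0$ by $\Int(d^{-1})\circ\psi_0$ for $d\in\cD_{\hom}$. Choosing $d=d_1^{-1}$ (invertible, of degree $t^{-1}$, because $\cD$ is graded-division) simultaneously normalizes $d_1$ to $1$ and makes every $d_i$ homogeneous of degree $e$, i.e. $d_i\in\cD_e^{\times}$; here $\Int(d_1)\circ\psi_0$ again lies in $\Stab(\Gamma_0)$, since conjugation by a homogeneous element preserves degrees ($G$ being abelian). This normalized representative is unique, because any further admissible $d$ must fix $d_1=1$ and hence equal $1$. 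Consequently the assignment $(\diag(1,d_2,\dots,d_k),\id,\psi_0)\mapsto((d_2,\dots,d_k),\psi_0)$ is a bijection onto $(\cD_e^{\times})^{k-1}\times\Stab(\Gamma_0)$.

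It then remains to verify that this bijection is a group homomorphism. Inserting two normalized triples into the product formula $(D,\id,\psi_0)*(D',\id,\psi_0')=(D\cdot\psi_0(D'),\id,\psi_0\circ\psi_0')$ and using $\psi_0(1)=1$ gives $D\cdot\psi_0(D')=\diag(1,d_2\psi_0(d_2'),\dots,d_k\psi_0(d_k'))$, which is already normalized because $\psi_0$ maps $\cD_e^{\times}$ into itself. Hence the product corresponds to $((d_2\psi_0(d_2'),\dots,d_k\psi_0(d_k')),\psi_0\circ\psi_0')$, which is exactly the multiplication in $(\cD_e^{\times})^{k-1}\rtimes\Stab(\Gamma_0)$ with $\Stab(\Gamma_0)$ acting componentwise. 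I expect the main obstacle to be the bookkeeping in the second paragraph: correctly invoking the fine condition to eliminate the permutation, and then seeing that precisely one degree of freedom among the $d_i$ is absorbed by the normalization, which is what reduces the exponent from $k$ to $k-1$.
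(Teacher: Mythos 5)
Your proposal is correct and takes essentially the same route as the paper: both read the stabilizer off the parametrization by triples $(D,\pi,\psi_0)$, use the diagonal entries to force $\psi_0\in\Stab(\Gamma_0)$, the fine condition to force $\pi=\id$, and the normalization $d_1=1$ to reduce the exponent to $k-1$. The only difference is that you spell out the uniqueness of the normalized representative and the verification of the semidirect-product multiplication, which the paper leaves implicit.
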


\begin{proof}
We fix $ d_1 = 1 $ so that
the correspondence between $\psi$ and $ ( D , \pi , \psi_0 ) $
is one-to-one.
Because of equations \eqref{eq:deg_Eij} and \eqref{eq:deg_im_Eij},
if $ \deg d_2 = \dots = \deg d_k = e $,
$ \pi = \id $ and $ \psi_0 \in \Stab(\Gamma_0) $,
then $ \psi \in \Stab(\Gamma) $.
Conversely suppose that $ \psi \in \Stab(\Gamma) $;
considering $ E_{ii} \otimes d $ we get $ \psi_0 \in \Stab(\Gamma_0) $;
since $(\kappa,\gamma)$ satisfy the fine condition,
we get $ \pi = \id $;
finally considering $ E_{i1} \otimes d $
we get $ \deg d_i = e $ for all $i$.
\end{proof}

\begin{proposition}\label{pro:diag_group}
The diagonal group of $\Gamma$ is isomorphic to:
\begin{equation}
\Diag(\Gamma) \cong ( \FF^{\times} )^{k-1} \times \Diag(\Gamma_0)
\end{equation}
\end{proposition}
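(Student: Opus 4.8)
The plan is to follow the same strategy as in Proposition \ref{pro:stab}: first identify which triples $(D,\pi,\psi_0)$ give diagonal automorphisms, and then read off the group structure from the product formula. Since a diagonal automorphism acts as a scalar on each homogeneous component, it in particular stabilizes the grading; hence by Proposition \ref{pro:stab} I may assume $\pi=\id$, fix $d_1=1$ to make the correspondence one-to-one, and take $d_i\in\cD_e^{\times}$ and $\psi_0\in\Stab(\Gamma_0)$. Under this normalization the automorphism sends $E_{ij}\otimes d$ to $E_{ij}\otimes d_i\psi_0(d)d_j^{-1}$, and by the fine condition the homogeneous component of degree $g_ig_j^{-1}t$ is exactly $E_{ij}\otimes\cD_t$.

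Next I would impose the diagonality condition $d_i\psi_0(d)d_j^{-1}=\lambda_{ijt}\,d$ for all $d\in\cD_t$ (with $\lambda_{ijt}\in\FF^{\times}$) on the distinguished components. Taking $i=j=1$ gives $\psi_0(d)=\lambda_{11t}\,d$ on each $\cD_t$, so $\psi_0\in\Diag(\Gamma_0)$; in particular $\psi_0$ restricts to the identity on the division algebra $\cD_e$. Then taking $i=1$, arbitrary $j$, $t=e$, and evaluating at $d=1$, I obtain $d_j^{-1}=\lambda_{1je}\in\FF^{\times}$, which forces each $d_j$ to be a nonzero scalar. Conversely, if $\psi_0\in\Diag(\Gamma_0)$ and all $d_j\in\FF^{\times}$, then on $\cD_t$ we have $d_i\psi_0(d)d_j^{-1}=d_id_j^{-1}\mu_t\,d$, a scalar multiple of $d$, so the automorphism is diagonal. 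This pins down $\Diag(\Gamma)$ as the set of triples $(\diag(1,d_2,\dots,d_k),\id,\psi_0)$ with $(d_2,\dots,d_k)\in(\FF^{\times})^{k-1}$ and $\psi_0\in\Diag(\Gamma_0)$.

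It remains to identify the group law, and here the key observation is that $\psi_0$ is $\FF$-linear, so it fixes the scalar diagonal matrix $D'$; hence $\psi_0(\pi(D'))=D'$ and the product formula collapses to $(D,\id,\psi_0)*(D',\id,\psi_0')=(DD',\id,\psi_0\circ\psi_0')$. Since the matrices $D$ are scalar they multiply componentwise and commute with everything, while $\Diag(\Gamma_0)$ is abelian, so the product is direct rather than semidirect, giving $\Diag(\Gamma)\cong(\FF^{\times})^{k-1}\times\Diag(\Gamma_0)$.

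Alternatively, one could argue entirely through universal groups: the diagonal group of a grading is the character group $\Hom(U,\FF^{\times})$ of its universal abelian group $U$, so Lemma \ref{lem:univ_group} yields $\Diag(\Gamma)\cong\Hom(\ZZ^{k-1}\times T,\FF^{\times})\cong(\FF^{\times})^{k-1}\times\Hom(T,\FF^{\times})$, and since for a division grading every product $\cD_s\cD_t$ is nonzero the universal group of $\Gamma_0$ is its support $T$, making the last factor $\Diag(\Gamma_0)$. The main obstacle in either route is the same subtle point: a priori the entries $d_j$ lie only in $\cD_e^{\times}$, and one must use the off-diagonal components to push them down into $\FF^{\times}$. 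This is exactly where the fine condition does the work, by guaranteeing that each $E_{ij}\otimes\cD_t$ is a single homogeneous component.
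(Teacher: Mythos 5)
Your main argument is correct and is essentially the paper's own proof: you extend the analysis of Proposition~\ref{pro:stab}, use $E_{11}\otimes d$ to force $\psi_0\in\Diag(\Gamma_0)$ and the off-diagonal matrix units $E_{1j}\otimes 1$ (the paper uses $E_{i1}\otimes 1$, which is equivalent) to push the $d_j$ down into $\FF^{\times}$, and observe that $\psi_0$ fixes scalars so the product is direct. Your alternative route via $\Diag(\Gamma)\cong\Hom(U,\FF^{\times})$ together with Lemma~\ref{lem:univ_group} is also sound and consistent with the paper's later identification $\Diag(\Gamma_0)\cong\Hom(T,\FF^{\times})$.
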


\begin{proof}
We extend the arguments of the proof of proposition \ref{pro:stab}.
Recall that $ \psi_0 (1) = 1 $.
If $ \psi \in \Diag(\Gamma) $,
considering $ E_{i1} \otimes 1 $
we get $ d_2 , \dots , d_k \in \FF^{\times} $,
and considering $ E_{11} \otimes d $
we get $ \psi_0 \in \Diag(\Gamma_0) $.
The action of $\psi_0$ on $\FF^{\times}$ is trivial
because $ \psi_0 (1) = 1 $.
\end{proof}

\begin{corollary}\label{cor:Stab=Diag}
If $ \dim \cD_e = 1 $,
then $ \Stab(\Gamma) = \Diag(\Gamma) $.
\qed
\end{corollary}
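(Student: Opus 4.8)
The plan is to prove the two inclusions separately. The inclusion $\Diag(\Gamma) \subseteq \Stab(\Gamma)$ holds for any grading, since a diagonal automorphism scales each homogeneous component and in particular preserves it. For the reverse inclusion I would take an arbitrary $\psi \in \Stab(\Gamma)$ and show that it acts as a scalar on every homogeneous component of $\cR$, so that $\psi \in \Diag(\Gamma)$.

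First I would record two consequences of the hypothesis $ \dim \cD_e = 1 $. Since $\cD_e$ is a division algebra over $\FF$ of dimension one, it equals $\FF$; and since $\cD$ is a graded-division algebra, multiplication by a nonzero homogeneous element of degree $t$ gives an $\FF$-linear isomorphism $\cD_e \to \cD_t$, so every homogeneous component $\cD_t$ (with $t \in T$) is also one-dimensional. Consequently $ \Stab(\Gamma_0) = \Diag(\Gamma_0) $: an automorphism preserving each one-dimensional $\cD_t$ necessarily acts on it by some scalar $\lambda_t \in \FF^{\times}$.

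By the proof of proposition \ref{pro:stab}, the automorphism $\psi \in \Stab(\Gamma)$ corresponds to a triple $ (D,\id,\psi_0) $ with $ D = \diag(d_1,\dots,d_k) $, each $ d_i \in \cD_e^{\times} = \FF^{\times} $, and $ \psi_0 \in \Stab(\Gamma_0) = \Diag(\Gamma_0) $. Since $P = \id$, a direct computation then shows that, for a homogeneous $ d \in \cD $ of degree $t$,
\begin{equation}
\psi ( E_{ij} \otimes d ) = E_{ij} \otimes d_i \psi_0(d) d_j^{-1} = ( d_i d_j^{-1} \lambda_t ) ( E_{ij} \otimes d ),
\end{equation}
using that the $d_i$ are central scalars and that $ \psi_0(d) = \lambda_t d $.

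The main point to verify is that this scalar action is constant on each homogeneous component of $\cR$, including the diagonal ones. For $ i \neq j $ the component of degree $ g_i g_j^{-1} t $ is one-dimensional, so there is nothing to check. The only genuine obstacle is the component $\cR_t$ with $ t \in T $, which is the $k$-dimensional space $ \bigoplus_i E_{ii} \otimes \cD_t $; here the scalar above specializes to $ d_i d_i^{-1} \lambda_t = \lambda_t $, independent of $i$, so $\psi$ acts on all of $\cR_t$ by the single scalar $\lambda_t$. Hence $\psi$ is diagonal, which gives $ \Stab(\Gamma) \subseteq \Diag(\Gamma) $ and therefore the desired equality.
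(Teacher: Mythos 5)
Your proof is correct and follows the same route the paper intends: the corollary is left as an immediate consequence of Propositions \ref{pro:stab} and \ref{pro:diag_group}, whose proofs parametrize $\Stab(\Gamma)$ and $\Diag(\Gamma)$ by triples $(D,\id,\psi_0)$ with $d_i\in\cD_e^{\times}$, $\psi_0\in\Stab(\Gamma_0)$ (respectively $d_i\in\FF^{\times}$, $\psi_0\in\Diag(\Gamma_0)$), and these coincide when $\dim\cD_e=1$. Your explicit verification that the scalar $d_i d_i^{-1}\lambda_t=\lambda_t$ is constant across the $k$-dimensional component $\cR_t$ for $t\in T$ is exactly the point that makes the equality of subgroups (not merely an abstract isomorphism) go through.
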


\begin{proposition}\label{pro:Weyl_group}
The Weyl group of $\Gamma$ is isomorphic to:
\begin{equation}
W(\Gamma) \cong T^{k-1} \rtimes ( \Sym(k) \times W(\Gamma_0) )
\end{equation}
In this semidirect product,
we identify $T^{k-1}$ with $ T^k / T $,
where $T$ is imbedded into $T^k$ diagonally;
$W(\Gamma_0)$ acts on $T^k$ componentwise,
and $ \pi \in \Sym(k) $ acts on $ \mathbb{T} = (t_1,\dots,t_k) \in T^k $
as $ \pi(\mathbb{T}) = ( t_{\pi^{-1}(1)} , \dots , t_{\pi^{-1}(k)} ) $,
that is,
the $i$-th element of $\mathbb{T}$ equals
the $\pi(i)$-th element of $\pi(\mathbb{T})$.
\end{proposition}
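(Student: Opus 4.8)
The plan is to describe $W(\Gamma) = \Aut(\Gamma)/\Stab(\Gamma)$ as the image of an explicit homomorphism out of $\Aut(\Gamma)$. I would start from the parametrization developed before Proposition~\ref{pro:stab}: every $\psi \in \Aut(\Gamma)$ is represented by a triple $(D,\pi,\psi_0)$ with $D=\diag(d_1,\dots,d_k)$, $d_i \in \cD_{\hom}$, $\pi \in \Sym(k)$ and $\psi_0 \in \Aut(\Gamma_0)$; the representative is unique only up to the relation $(D,\pi,\psi_0) \sim (D\cdot d,\pi,\Int(d^{-1})\circ\psi_0)$ with $d\in\cD_{\hom}$; and composition obeys $(D,\pi,\psi_0)*(D',\pi',\psi_0') = (D\cdot\psi_0(\pi(D')),\pi\circ\pi',\psi_0\circ\psi_0')$. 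A preliminary observation is that each $\psi_0\in\Aut(\Gamma_0)$ permutes the homogeneous components of $\cD$ and hence induces an automorphism of $T$; since $\psi_0$ is multiplicative and $\cD$ is a graded-division algebra this yields a group homomorphism $\Aut(\Gamma_0)\to\Aut(T)$, and $\Stab(\Gamma_0)$ lies in its kernel, so it factors through $W(\Gamma_0)$. I write $\overline{w}\in\Aut(T)$ for the action of $w\in W(\Gamma_0)$, so that $\deg\psi_0(d)=\overline{[\psi_0]}(\deg d)$ for $d\in\cD_{\hom}$.

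Next I would define $\Phi\colon\Aut(\Gamma)\to T^{k-1}\rtimes(\Sym(k)\times W(\Gamma_0))$ by $\Phi(D,\pi,\psi_0)=\big([(\deg d_1,\dots,\deg d_k)],\,\pi,\,[\psi_0]\big)$, where on the first slot $[\cdot]$ denotes the class in $T^k/T\cong T^{k-1}$ and on the last slot $[\cdot]$ denotes the class in $W(\Gamma_0)$. The first thing to check is that $\Phi$ is well defined on equivalence classes: replacing $(D,\pi,\psi_0)$ by $(D\cdot d,\pi,\Int(d^{-1})\circ\psi_0)$ multiplies every $\deg d_i$ by the single element $\deg d\in T$, which leaves the class in $T^k/T$ unchanged, and—because conjugation by a homogeneous element belongs to $\Stab(\Gamma_0)$, as noted in the remark after Theorem~\ref{th:inn_aut}—it does not alter the class $[\psi_0]\in W(\Gamma_0)$.

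The crux of the argument, and the step I expect to demand the most care, is verifying that $\Phi$ is a homomorphism, i.e.\ that the composition rule $*$ corresponds to multiplication in the semidirect product. Computing the $i$-th diagonal entry of $D\cdot\psi_0(\pi(D'))$ gives $d_i\,\psi_0(d'_{\pi^{-1}(i)})$, of degree $\deg d_i\cdot\overline{[\psi_0]}(\deg d'_{\pi^{-1}(i)})$. On the other side, multiplying $\big(\mathbb{S},\pi,w\big)$ by $\big(\mathbb{T}',\pi',w'\big)$ in the semidirect product produces a first coordinate whose $i$-th entry is $s_i\cdot\overline{w}(t'_{\pi^{-1}(i)})$, using that $\pi$ acts by $\pi(\mathbb{T}')_i=t'_{\pi^{-1}(i)}$ and that $w$ acts componentwise. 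With $\mathbb{S}=(\deg d_i)_i$, $\mathbb{T}'=(\deg d'_i)_i$ and $w=[\psi_0]$ these agree entry by entry, so the two products have the same image, while the permutation and $W(\Gamma_0)$ parts match immediately. What needs attention here is that the componentwise $W(\Gamma_0)$-action commutes with the $\Sym(k)$-permutation (so the direct product $\Sym(k)\times W(\Gamma_0)$ really does act on $T^k$), and that everything stays consistent after passing to the quotient $T^k/T$.

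Finally I would settle surjectivity and the kernel. Surjectivity is straightforward: since $\Gamma_0$ is a division grading, every $t\in T$ is the degree of some element of $\cD_{\hom}$, so any target element is hit by choosing diagonal entries of the prescribed degrees, the given permutation, and any preimage in $\Aut(\Gamma_0)$ of the given class in $W(\Gamma_0)$. For the kernel, $\Phi(D,\pi,\psi_0)$ being trivial forces $\pi=\id$, $\psi_0\in\Stab(\Gamma_0)$, and $\deg d_1=\dots=\deg d_k=:t\in T$; choosing $d\in\cD_{\hom}$ with $\deg d=t^{-1}$ and applying the relation $\sim$ normalizes all the $\deg d_i$ to $e$ while keeping the $\psi_0$-part in $\Stab(\Gamma_0)$, which is exactly the description of $\Stab(\Gamma)$ from Proposition~\ref{pro:stab}; conversely every such element clearly lies in $\ker\Phi$. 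Hence $\ker\Phi=\Stab(\Gamma)$ and $\Phi$ is onto, so the first isomorphism theorem gives $W(\Gamma)=\Aut(\Gamma)/\Stab(\Gamma)\cong T^{k-1}\rtimes(\Sym(k)\times W(\Gamma_0))$ with the asserted action.
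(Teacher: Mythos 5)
Your argument is correct. It rests on exactly the same ingredients as the paper's proof --- the parametrization of $\Aut(\Gamma)$ by classes of triples $(D,\pi,\psi_0)$, the composition rule $*$, and the identification of $\Stab(\Gamma)$ from proposition \ref{pro:stab} --- but it assembles them differently. The paper first builds the epimorphism $\bar f\colon W(\Gamma)\to\Sym(k)\times W(\Gamma_0)$, computes that $\ker\bar f\cong T^k/T$ via the degree map, and then exhibits a section coming from $(\pi,\psi_0)\mapsto(I_k,\pi,\psi_0)$; the splitting automatically yields a semidirect product, and the description of the action is left implicit in the conjugation by the section. You instead map $\Aut(\Gamma)$ directly onto the already-assembled group $T^{k-1}\rtimes(\Sym(k)\times W(\Gamma_0))$ and invoke the first isomorphism theorem, which obliges you to verify compatibility with the twisted multiplication entry by entry. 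The trade-off is clear: the paper's route is shorter because a split exact sequence gives the semidirect product for free, while your route does more bookkeeping but explicitly certifies that the action is the one stated in the proposition (componentwise for $W(\Gamma_0)$, by place permutation for $\Sym(k)$, descending to $T^k/T$), a point the paper's proof does not spell out. Your preliminary observation that $\Aut(\Gamma_0)\to\Aut(T)$ is a homomorphism killing $\Stab(\Gamma_0)$, and your check that the two actions on $T^k$ commute and preserve the diagonal copy of $T$, are exactly the details needed to make that explicit version work; no step is missing.
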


\begin{proof}
We can define an epimorphism of groups
$ f : \Aut(\Gamma) \to \Sym(k) \times W(\Gamma_0) $
because changing $\psi_0$ by $ \Int(d^{-1}) \circ \psi_0 $
does not change its class in $W(\Gamma_0)$.
Clearly $\Stab(\Gamma)$ is contained in $ K = \ker f $,
so we can consider
$ \bar{f} : W(\Gamma) \to \Sym(k) \times W(\Gamma_0) $.
The map $ K \to T^k / T $ that sends
$ ( (d_1,\dots,d_k) , \id , \psi_0 ) $ to
$ ( \deg d_1 , \dots , \deg d_k ) T $
is a well defined epimorphism of groups
whose kernel is precisely $\Stab(\Gamma)$;
therefore the kernel of $\bar{f}$ is isomorphic to $T^{k-1}$.

We have to show that the map
$ \bar{f} : W(\Gamma) \to \Sym(k) \times W(\Gamma_0) $
can be split.
From the homomorphism
$ \Sym(k) \times \Aut(\Gamma_0) \to \Aut(\Gamma) $
given by
$ ( \pi , \psi_0 ) \mapsto ( I_k , \pi , \psi_0 ) $,
we get a homomorphism
$ \Sym(k) \times \Aut(\Gamma_0) \to W(\Gamma) $
such that
$ \{ \id \} \times \Stab(\Gamma_0) $
is contained in its kernel,
so we obtain a section of $\bar{f}$.
\end{proof}

Propositions \ref{pro:stab}, \ref{pro:diag_group} and \ref{pro:Weyl_group}
generalize \cite[theorem 2.44]{EK2013}
to the case in which the ground field
is not necessarily algebraically closed
(see also \cite{EK2012}).
In the algebraically closed case,
the formulas for the diagonal group and the Weyl group remain the same,
whereas corollary \ref{cor:Stab=Diag} tells us that
the formula for the stabilizer is simplified
from a semidirect product to a direct product.

We can find an explicit description of
the action of the Weyl group of the grading
on its universal abelian group
in \cite[corollary 2.45]{EK2013}.
Remark that this corollary is a consequence of
equation \eqref{eq:deg_im_Eij}.

\medskip

The conclusion of this section is that,
in order to know the automorphism groups of $\cR$,
we have to study the automorphism groups of
the graded-division algebra $\cD$.
Clearly the diagonal group of $\Gamma_0$ is isomorphic to
the group of characters of the support $T$:
\begin{equation}
\Diag(\Gamma_0) \cong \Hom( T , \FF^{\times} )
\end{equation}
In sections \ref{sect:compl_grad} and \ref{sect:non-compl_grad}
we are going to compute
the stabilizer and the Weyl group of $\Gamma_0$
in the case in which
the ground field is the field of real numbers,
$ \FF = \RR $,
and the underlying algebra $\cD$
(disregarding the grading)
is simple and of finite dimension.

\section{Automorphism groups of complex division gradings}\label{sect:compl_grad}

Let $G$ be an abelian group,
$\cD$ a real algebra isomorphic to $M_n(\CC)$,
and $\Gamma_0$ a division $G$-grading on $\cD$.
Assume that the homogeneous components have dimension $2$,
and that the identity component $\cD_e$ coincides with the center $Z(\cD)$;
according to \cite[section 3]{Rod2016},
we say that the division grading $\Gamma_0$ is of type (2-f).
In this section we are going to compute
the automorphism groups of $\Gamma_0$.

This division grading $\Gamma_0$ can be regarded as
a grading of the complex algebra $M_n(\CC)$,
and these gradings are classified in \cite[theorem 2.15]{EK2013}.
The support $ T \subseteq G $ of $\Gamma_0$ is a group
isomorphic to $ \ZZ_{\ell_1}^2 \times \dots \times \ZZ_{\ell_r}^2 $,
where $ \ell_1 \cdots \ell_r = n $.
The commutation relations of homogeneous elements
define a $\CC$-valued alternating bicharacter $\beta$ on $T$
such that $ \rad(\beta) =
\{ t \in T \mid \beta(u,t)=1 , \, \forall u \in T \}
= \{ e \} $.
By a $\CC$-valued alternating bicharacter we mean a map
$ \beta : T \times T \to \CC^{\times} $ that satisfies
$ \beta(uv,w) = \beta(u,w) \beta(v,w) $,
$ \beta(u,vw) = \beta(u,v) \beta(u,w) $,
and $ \beta(u,u) = 1 $ for all $ u,v,w \in T $.
The commutation relations are explicitly the following;
for all $ X_u \in \cD_u $ and $ X_v \in \cD_v $ we have:
\begin{equation}\label{eq:comm_rel}
X_u X_v = \beta(u,v) X_v X_u
\end{equation}

\medskip

The equivalence classes of these complex gradings remain the same
if we regard them as gradings over the field of real numbers.
So two of these gradings $\Gamma_0$ and $\Gamma_0'$ are equivalent
if and only if their corresponding supports are isomorphic groups,
$ T \cong T' $.

\begin{remark}
In \cite[remark 18]{Rod2016} and \cite[section 8]{BKR2018a} we say that
the isomorphism classes of these complex gradings remain the same
if we regard them as gradings over the field of real numbers,
but this is not correct.
Two of these $G$-gradings $\Gamma_0$ and $\Gamma_0'$ are isomorphic
as complex gradings
if and only if their corresponding supports and alternating bicharacters coincide,
$ T = T' $ and $ \beta = \beta' $.
However,
since any isomorphism preserves the centers,
$\Gamma_0$ and $\Gamma_0'$ are isomorphic
as real gradings
if and only if $ T = T' $ and
either $ \beta = \beta' $ or $ \overline{\beta} = \beta' $.
\end{remark}

Recall from \cite{Rod2016}
(see also \cite{BZ2016})
that the division grading $\Gamma_0$ is fine
in the class of abelian group gradings
if and only if
the support $T$ is \emph{not}
an elementary abelian $2$-group,
if and only if $ \beta \neq \overline{\beta} $.

\begin{proposition}\label{pro:aut_2-f}
Suppose that the division grading $\Gamma_0$ is of type (2-f).
Then the Weyl group of $\Gamma_0$ is:
\begin{align}
W(\Gamma_0) = \{ f \in \Aut(T) \mid \text{either } &
\beta( f(u),f(v) ) = \beta(u,v) , \, \forall u,v \in T , \nonumber \\
\text{or } &
\beta( f(u),f(v) ) = \overline{\beta(u,v)} , \, \forall u,v \in T \}
\end{align}
Whereas the stabilizer of $\Gamma_0$ is isomorphic to:
\begin{itemize}
\item
If $ \beta \neq \overline{\beta} $,
then all automorphisms of $\Stab(\Gamma_0)$ are inner and:
\begin{equation}\label{eq:stab_2-f_fine}
\Stab(\Gamma_0) \cong \Hom( T , \CC^{\times} ) \cong T
\end{equation}
\item
If $ \beta = \overline{\beta} $,
then:
\begin{equation}\label{eq:stab_2-f_not_fine}
\Stab(\Gamma_0) \cong T \times \ZZ_2
\end{equation}
\end{itemize}
\end{proposition}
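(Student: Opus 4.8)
The plan is to fix nonzero homogeneous elements $X_t \in \cD_t$ for every $t \in T$, so that $X_u X_v = \sigma(u,v) X_{uv}$ for a $\CC$-valued $2$-cocycle $\sigma$ whose associated bicharacter is $\beta$, and to exploit that $\cD_e = Z(\cD)$ is isomorphic to $\CC$ as an $\RR$-algebra. The first key observation is that any $\psi \in \Aut(\Gamma_0)$ restricts to an $\RR$-algebra automorphism of the center $\cD_e \cong \CC$, and hence acts on $\cD_e$ either as the identity (in which case I call $\psi$ \emph{complex-linear}) or as complex conjugation (\emph{complex-antilinear}). Moreover $\psi$ permutes the homogeneous components compatibly with the group structure of the support, so it induces an $f \in \Aut(T)$.

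To obtain the Weyl group I would write $\psi(X_u) = \lambda_u X_{f(u)}$ with $\lambda_u \in \CC^{\times}$ and substitute into the commutation relation \eqref{eq:comm_rel}. A direct computation shows that $\beta(f(u),f(v)) = \beta(u,v)$ when $\psi$ is complex-linear, and $\beta(f(u),f(v)) = \overline{\beta(u,v)}$ when $\psi$ is complex-antilinear; this yields the inclusion of $W(\Gamma_0)$ in the stated group. For the reverse inclusion, given $f \in \Aut(T)$ satisfying one of the two conditions, the equality of bicharacters means that $\sigma \circ (f \times f)$ and $\sigma$ (respectively $\sigma \circ (f \times f)$ and $\overline{\sigma}$) are cohomologous, so by the classification of complex gradings \cite[theorem 2.15]{EK2013} one can solve for the $\lambda_u$ and produce an algebra automorphism inducing $f$, either $\CC$-linear or $\CC$-antilinear according to the case.

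For the stabilizer I would specialize to $f = \id$. The complex-linear elements are exactly the maps $\psi_{\lambda} \colon X_t \mapsto \lambda_t X_t$ with $\lambda \in \Hom(T,\CC^{\times})$, giving a subgroup $N \cong \Hom(T,\CC^{\times}) \cong T$ (the last isomorphism because $T$ is finite abelian); by Skolem--Noether every such $\CC$-linear automorphism of $M_n(\CC)$ is inner. A complex-antilinear stabilizer element exists if and only if the constraint $\beta = \overline{\beta}$ holds, which by the remark preceding the proposition is equivalent to $T$ being an elementary abelian $2$-group. In the case $\beta \neq \overline{\beta}$ there are no antilinear stabilizer elements, so $\Stab(\Gamma_0) = N \cong T$ and all its elements are inner, proving \eqref{eq:stab_2-f_fine}.

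In the case $\beta = \overline{\beta}$ I would choose the $X_t$ so that $\sigma$ is $\{\pm 1\}$-valued and set $c \colon z X_t \mapsto \overline{z}\, X_t$; this is a well-defined complex-antilinear element of $\Stab(\Gamma_0)$ with $c^2 = \id$. A short computation gives $c\, \psi_{\lambda}\, c^{-1} = \psi_{\overline{\lambda}}$, and since every character of an elementary abelian $2$-group is $\{\pm 1\}$-valued we have $\overline{\lambda} = \lambda$, so $c$ centralizes $N$. As every antilinear stabilizer element is a product of $c$ with a member of $N$, this yields $\Stab(\Gamma_0) = N \times \langle c \rangle \cong T \times \ZZ_2$, which is \eqref{eq:stab_2-f_not_fine}. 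The main obstacle I anticipate is the realization step in the Weyl group argument and, relatedly, arranging a $\{\pm 1\}$-valued cocycle in the antilinear case; both rest on the classification of complex division gradings rather than on a direct hands-on construction.
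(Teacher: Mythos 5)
Your proposal is correct and follows essentially the same route as the paper: the Weyl group is read off from the isomorphism invariants $(T,\{\beta,\overline{\beta}\})$, the dichotomy first kind/second kind (your complex-linear/antilinear) governs the stabilizer, and in the case $\beta=\overline{\beta}$ your normalized basis with $\{\pm 1\}$-valued cocycle together with the conjugation $c$ is exactly the real form $\cD_{\RR}$ and the decomposition $\Stab(\Gamma_0)\cong\Stab(\cD_{\RR})\times\Stab(\cD_e)$ used in the paper. The only cosmetic difference is that the paper obtains $\Hom(T,\CC^{\times})\cong T$ via nondegeneracy of $\beta$ (or via theorem \ref{th:inn_aut}) rather than via finite abelian duality, which does not affect the argument.
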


\begin{proof}
The Weyl group is easily obtained
because we know the invariants that determine
the isomorphism class of the grading.
Observe that the automorphisms may be
either of the first kind or of the second kind.

If $ \beta \neq \overline{\beta} $,
then there exist $ u,v \in T $ such that
$ \beta(u,v) \neq \overline{\beta(u,v)} $.
Therefore equation \eqref{eq:comm_rel} implies that
all automorphisms of $\Stab(\Gamma_0)$ are of the first kind,
that is, inner.
We have many ways to prove the isomorphisms of
equation \eqref{eq:stab_2-f_fine}.
$ \Stab(\Gamma_0) \cong \Hom( T , \CC^{\times} ) $
because all automorphisms of $\Stab(\Gamma_0)$ are of the first kind.
$ \Stab(\Gamma_0) \cong T $
because of theorem \ref{th:inn_aut}.
$ \Hom( T , \CC^{\times} ) \cong T $
because $\beta$ is nondegenerate.

If $ \beta = \overline{\beta} $,
then by \cite{Rod2016}
we can find a graded subalgebra $\cD_{\RR}$ of $\cD$ such that:
$\cD_{\RR}$ is a real algebra isomorphic to $M_n(\RR)$,
$\cD_{\RR}$ is a graded-division algebra
with homogeneous components of dimension $1$,
and the graded algebra $\cD$ is isomorphic to
the graded tensor product $ \cD_{\RR} \otimes \cD_e $:
\begin{equation}
\cD \cong \cD_{\RR} \otimes \cD_e
\end{equation}
Let $ \mu : T \to \{ \pm 1 \} $ be the quadratic form
associated to the graded-division algebra $\cD_{\RR}$.
This means that for every $ t \in T $ there exists
a homogeneous element $ X_t \in \cD_{\RR} $ of degree $t$
such that $ X_t^2 = \mu(t) I $,
where $I$ is the unity of $\cD$.
Moreover,
there are exactly two elements in $\cD$ of degree $t$
whose square is $ \mu(t) I $:
$X_t$ and $-X_t$.
Hence,
given a $ \psi \in \Stab(\Gamma_0) $,
the restriction of $\psi$ to $\cD_{\RR}$ is well defined,
and therefore $\Stab(\Gamma_0)$ is just the direct product
of the stabilizers of $\cD_{\RR}$ and $\cD_e$:
\begin{equation}
\Stab(\cD_{\RR}) \cong \Hom( T , \{ \pm 1 \} ) \cong T
\end{equation}
\begin{equation}
\Stab(\cD_e) \cong \Aut(\CC) \cong \ZZ_2
\end{equation}
\end{proof}

\begin{remark}
The isomorphism of equation \eqref{eq:stab_2-f_not_fine}
depends on the choice of the real form $\cD_{\RR}$
if $ n \geq 2 $.
However there is a way to encode the group $\Stab(\Gamma_0)$
without making any choice.
Denote by $\Quad(T,\beta)$ the set of quadratic forms
whose polarization is $\beta$,
that is:
\begin{equation}
\Quad(T,\beta) =
\{
\eta : T \to \{ \pm 1 \} \mid
\eta(uv) = \beta(u,v) \eta(u) \eta(v)
, \, \forall u,v \in T \}
\end{equation}
On the one hand
any automorphism $ \psi \in \Stab(\Gamma_0) $ of the first kind
corresponds to a character
$ \chi \in \Hom( T , \CC^{\times} ) = \Hom( T , \{ \pm 1 \} ) $
by means of the following equation,
where $ X_t \in \cD_t $:
\begin{equation}
\psi(X_t) = \chi(t) X_t
\end{equation}
On the other hand,
because of \cite[sections 8 and 11]{BKR2018a},
there is a distinguished involution $\varphi_0$ of the graded algebra $\cD$,
and it is of the second kind.
Moreover,
we can write any $ \psi \in \Stab(\Gamma_0) $ of the second kind as
$ \psi = \varphi \circ \varphi_0 = \varphi_0 \circ \varphi $,
where $\varphi$ is an involution of the graded algebra $\cD$
and of the first kind.
The involution $\varphi$ is encoded in a natural way
by a quadratic form $ \eta \in \Quad(T,\beta) $
by means of the following equation
(see \cite[equation (11)]{BKR2018a}),
where $ X_t \in \cD_t $:
\begin{equation}
\varphi(X_t) = \eta(t) X_t
\end{equation}
Therefore,
if $\Gamma_0$ is of type (2-f),
$ \beta = \overline{\beta} $
and $ n \geq 2 $,
then:
\begin{equation}
\Stab(\Gamma_0) \cong
\Hom( T , \{ \pm 1 \} ) \cup \Quad(T,\beta)
\end{equation}
Observe that this union is disjoint because $ n \geq 2 $.
Also note that,
since $ \varphi \circ \varphi_0 = \varphi_0 \circ \varphi $,
the product of two elements in this union set
is just the usual product of maps that take values in $ \{ \pm 1 \} $.
\end{remark}

\section{Automorphism groups of non-complex division gradings}\label{sect:non-compl_grad}

Let $G$ be an abelian group,
$\cD$ a finite-dimensional simple real associative algebra,
and $\Gamma_0$ a division $G$-grading on $\cD$.
These gradings are classified in \cite{BZ2016} or \cite{Rod2016},
here we follow the notation of \cite{Rod2016}.
Note that the real algebra $\cD$ is isomorphic to
either $M_n(\RR)$, $M_{n/2}(\HH)$ or $M_n(\CC)$;
also note that all homogeneous components of $\Gamma_0$
have the same dimension,
which can be $1$, $2$ or $4$,
according to the identity component $\cD_e$ being isomorphic to
$\RR$, $\CC$ or $\HH$.
In section \ref{sect:compl_grad} we have studied
the automorphism groups of $\Gamma_0$
in the case $ \cD_e = Z(\cD) \cong \CC $,
now we focus on the rest of the cases.

Let $I$ be the unity of $\cD$.
We denote by $T$ the support of $\Gamma_0$,
we define $ T^{[2]} = \{ t^2 \mid t \in T \} $,
and we denote by $K$
the support of the centralizer of the identity component,
$ K = \supp ( C_{\cD} (\cD_e) ) $.
If the homogeneous components have dimension $1$ or $4$
then $K=T$,
whereas if the homogeneous components have dimension $2$
and $ \cD_e \neq Z(\cD) $
then $K$ is a subgroup of $T$ of index $2$.
Let $ \beta : K \times K \to \{ \pm 1 \} $
be the alternating bicharacter
given by the commutation relations
in the centralizer of the identity component.
Finally we define the multiplicative group $\cD_{\hom}$
of nonzero homogeneous elements of $\cD$.

\begin{remark}\label{rem:Weyl_non-compl}
In \cite{Rod2016} the invariants
that characterize the isomorphisms classes of these gradings
are determined.
Note that these invariants are explicitly compiled in \cite{BKR2018a}.
As a consequence we immediately obtain their Weyl groups.
For example if $\Gamma_0$ is of type (2-b),
then by \cite[section 7]{BKR2018a}
the real algebra $\cD$ is isomorphic to $M_{n/2}(\HH)$
with $ n = 2^m \geq 2 $,
and the grading $\Gamma_0$ is determined up to isomorphism by $(T,K,\nu)$,
where $T$ is a subgroup of $G$ isomorphic to $\ZZ_2^{2m-1}$,
$K$ is a subgroup of $T$ of index $2$,
and $ \nu : T \setminus K \to \{ \pm 1 \} $
is a nice map such that
$\beta_{\nu}$ has type I and $ \Arf(\nu) = -1 $.
Therefore the Weyl group of $\Gamma_0$ in the case (2-b) is:
\begin{align}
W(\Gamma_0) & = \Aut(T,K,\nu) = \nonumber \\
& = \{ f \in \Aut(T) \mid
f(K) = K \text{ and } \nu(f(u)) = \nu(u)
, \, \forall u \in T \setminus K \}
\end{align}
\end{remark}

\begin{proposition}\label{pro:stab_non_compl}
The stabilizer of $\Gamma_0$ is isomorphic to:
\begin{itemize}
\item
If $\Gamma_0$ is of type (1-a), (1-b) or (1-c):
\begin{equation}\label{eq:stab_1-a-b-c}
\Stab(\Gamma_0) \cong \Hom( T , \{ \pm 1 \} ) \cong T
\end{equation}
\item
If $\Gamma_0$ is of type (1-d),
then all automorphisms of $\Stab(\Gamma_0)$ are inner and:
\begin{equation}\label{eq:stab_1-d}
\Stab(\Gamma_0) \cong \Hom( T , \{ \pm 1 \} ) \cong T / T^{[2]}
\end{equation}
\item
If $\Gamma_0$ is of type (2-a), (2-b) or (2-c):
\begin{equation}\label{eq:stab_2-a-b-c}
\Stab(\Gamma_0) \cong
( \CC^{\times} / \RR^{\times} )
\rtimes T
\end{equation}
In this semidirect product,
the elements of $ T \setminus K $
act on $ \CC^{\times} / \RR^{\times} $
by conjugation,
whereas the elements of $K$
act trivially on $ \CC^{\times} / \RR^{\times} $.
\item
If $\Gamma_0$ is of type (2-d) or (2-e),
then all automorphisms of $\Stab(\Gamma_0)$ are inner and:
\begin{equation}\label{eq:stab_2-d-e}
\Stab(\Gamma_0) \cong
( \CC^{\times} / \RR^{\times} )
\rtimes ( T / T^{[2]} )
\end{equation}
In this semidirect product,
the elements of $ ( T \setminus K ) / T^{[2]} $
act on $ \CC^{\times} / \RR^{\times} $
by conjugation,
whereas the elements of $ K / T^{[2]} $
act trivially on $ \CC^{\times} / \RR^{\times} $.
\item
If $\Gamma_0$ is of type (3-a), (3-b) or (3-c):
\begin{equation}\label{eq:stab_3-a-b-c}
\Stab(\Gamma_0) \cong
\Aut(\HH) \times \Hom( T , \{ \pm 1 \} ) \cong
\Aut(\HH) \times T
\end{equation}
\item
If $\Gamma_0$ is of type (3-d),
then all automorphisms of $\Stab(\Gamma_0)$ are inner and:
\begin{equation}\label{eq:stab_3-d}
\Stab(\Gamma_0) \cong
\Aut(\HH) \times \Hom( T , \{ \pm 1 \} ) \cong
\Aut(\HH) \times ( T / T^{[2]} )
\end{equation}
\end{itemize}
\end{proposition}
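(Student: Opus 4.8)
The plan is to compute $\Stab(\Gamma_0)$ case by case according to the classification in \cite{Rod2016}, using the same organizing principle throughout: an automorphism $\psi$ fixing every homogeneous component acts on each $\cD_t$ as multiplication by a scalar from the endomorphism algebra of that component, and the multiplicativity of the grading forces these scalars to be compatible. The key dichotomy is whether the identity component $\cD_e$ is $\RR$, $\CC$ or $\HH$, which governs both the scalars available and whether there can be automorphisms of the second kind (i.e.\ not fixing $Z(\cD)$ pointwise). I would treat the three families (dimension $1$, $2$ and $4$ of the homogeneous components) in turn, and within each family separate the fine case from the non-fine case, the latter being flagged by the squaring map $t \mapsto t^2$ being nontrivial on $T$.

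\textbf{The dimension-one cases (types 1-a through 1-d).}
Here $\cD_e \cong \RR$, so a stabilizer automorphism $\psi$ sends each homogeneous $X_t$ to $\chi(t) X_t$ with $\chi(t) \in \RR^{\times}$. Because $\cD_{\RR}$ is a graded-division algebra, $X_t^2 = \mu(t) I$ for a sign $\mu(t)$, and applying $\psi$ to this relation forces $\chi(t)^2 = 1$, so $\chi \in \Hom(T, \{\pm 1\})$; this gives the first isomorphism of \eqref{eq:stab_1-a-b-c} and \eqref{eq:stab_1-d}. The second isomorphism $\Hom(T,\{\pm 1\}) \cong T$ (resp.\ $\cong T/T^{[2]}$) is a character-group computation: in types (1-a)–(1-c) the bicharacter is nondegenerate on all of $T$, while in type (1-d) the radical is exactly $T^{[2]}$, so Pontryagin duality identifies the character group with $T/T^{[2]}$. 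The claim that all such automorphisms are inner in type (1-d) I would deduce from theorem \ref{th:inn_aut}, exactly as in the (2-f) fine case: each character is realized by conjugation by a suitable homogeneous invertible element, and theorem \ref{th:inn_aut} guarantees these exhaust the inner stabilizer.

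\textbf{The dimension-two and dimension-four cases.}
When $\cD_e \cong \CC$ but $\cD_e \neq Z(\cD)$ (types 2-a through 2-e), a stabilizer automorphism need no longer fix $\cD_e$ pointwise: restricted to $\cD_e \cong \CC$ it is either the identity or complex conjugation, which is the source of the factor $\CC^{\times}/\RR^{\times}$ (the scalars on the identity component, modulo the reals that commute with everything). The semidirect structure in \eqref{eq:stab_2-a-b-c} and \eqref{eq:stab_2-d-e} arises because the homogeneous elements indexed by $T \setminus K$ lie outside the centralizer of $\cD_e$ and hence conjugate $\CC$ nontrivially, while those in $K$ centralize it; I would verify the action by commuting $X_t$ past an element of $\cD_e$ and reading off whether conjugation or the identity results. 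For types 3-a through 3-d we have $\cD_e \cong \HH$, and since $\Aut(\HH)$ is realized entirely by inner automorphisms of $\HH$ which extend to the whole graded algebra and commute with the character part, the stabilizer splits as the direct product \eqref{eq:stab_3-a-b-c}, \eqref{eq:stab_3-d}. In each non-fine case (1-d, 2-d, 2-e, 3-d) the assertion that all automorphisms are inner again follows from theorem \ref{th:inn_aut} together with the identification of the character group with $T/T^{[2]}$.

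\textbf{Main obstacle.}
The routine part is the scalar bookkeeping; the delicate point is correctly separating inner from outer automorphisms and making the $T$ versus $T/T^{[2]}$ distinction precise. Concretely, the hard step is to show that in the ``inner'' cases every stabilizer automorphism is conjugation by a homogeneous element \emph{and} that the resulting character group collapses to $T/T^{[2]}$ rather than $\Hom(T,\{\pm 1\}) \cong T$; this hinges on identifying the radical of the relevant bicharacter with $T^{[2]}$, which is where the structural data from \cite{Rod2016} (the type, the bicharacter $\beta$, the auxiliary map $\nu$) must be invoked type by type. I expect the dimension-two non-centered cases (2-a through 2-e) to demand the most care, since there both the conjugation action on $\CC^{\times}/\RR^{\times}$ and the $K$-versus-$T\setminus K$ splitting must be tracked simultaneously with the inner/outer analysis.
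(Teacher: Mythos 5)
Your overall organization (split by the dimension of the homogeneous components, then by fine versus non-fine type) matches the paper's, and the dimension-one and dimension-four cases are essentially right: the paper also gets (1-a)--(1-d) from $\Stab(\Gamma_0)=\Diag(\Gamma_0)\cong\Hom(T,\{\pm1\})$ and gets (3-a)--(3-d) by the double centralizer decomposition $\cD\cong\cD_e\otimes C_{\cD}(\cD_e)$. One small correction there: in case (1-c) the bicharacter $\beta$ is \emph{not} nondegenerate (its radical contains the degree of the central element); the isomorphism $\Hom(T,\{\pm1\})\cong T$ holds only because $T$ is an elementary abelian $2$-group, and it is non-canonical.

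There are two genuine gaps. First, in the dimension-two cases you assert the semidirect product structure but never produce a section $T\to\Stab(\Gamma_0)$ (resp.\ $T/T^{[2]}\to\Stab(\Gamma_0)$). The extension $1\to\CC^{\times}/\RR^{\times}\to\Stab(\Gamma_0)\to T\to 1$ does not split for free; the paper splits it by invoking \cite[proposition 20]{Rod2016} to refine $\Gamma_0$ to a division grading with one-dimensional components and support $\ZZ_2\times T$, which supplies a subgroup of $\cD_{\hom}/\RR^{\times}$ isomorphic to $\ZZ_2\times T$ and hence, keeping half of its elements, the desired section. Without some such construction the claimed isomorphisms \eqref{eq:stab_2-a-b-c} and \eqref{eq:stab_2-d-e} are not established. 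Second, you derive ``all automorphisms of $\Stab(\Gamma_0)$ are inner'' in cases (2-d) and (2-e) from theorem \ref{th:inn_aut}, but that theorem only describes inner automorphisms that happen to lie in the stabilizer; it cannot show that a given stabilizer automorphism is of the first kind. This is exactly the point you flag as delicate, and it needs a separate argument: for (2-e) the paper takes $t\in T$ of order $4$ and a normalized $X_t$ with $X_t^4=-I$, so that $X_t^2\in Z(\cD)$ and $\psi(X_t)^2=X_t^2$, forcing $\psi$ to fix the center pointwise; for (2-d) it reduces to (2-a)/(2-b) and (1-d) via a tensor decomposition $\cD\cong(\cD_e\oplus\cD_g)\otimes C_{\cD}(\cD_e\oplus\cD_g)$. (Your counting argument for (1-d) --- every character of $T$ into $\{\pm1\}$ is of the form $\beta(u,\cdot)$ because $\rad(\beta)=T^{[2]}$ and the groups are finite of equal order --- does work there, but it does not transfer to (2-e), where the stabilizer is not a character group.)
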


\begin{proof}
If the homogeneous components have dimension $1$,
then clearly:
\begin{equation}
\Stab(\Gamma_0) = \Diag(\Gamma_0) \cong
\Hom( T , \RR^{\times} ) = \Hom( T , \{ \pm 1 \} )
\end{equation}
Remark that in the cases (1-a) and (1-b)
the alternating bicharacter $\beta$ is nondegenerate,
and so it defines an isomorphism
between $ \Hom( T , \{ \pm 1 \} ) $ and $T$.
However in the case (1-c)
we know that $ \Hom( T , \{ \pm 1 \} ) \cong T $
because $T$ is an elementary abelian $2$-group,
but we have to make a choice in order to define this isomorphism.
Finally in the case (1-d) we do not have to make any choice;
any character $ \chi \in \Hom( T , \{ \pm 1 \} ) $
acts trivially on $T^{[2]}$,
that is,
all automorphisms of $\Stab(\Gamma_0)$ are inner,
so $ \Stab(\Gamma_0) \cong T / T^{[2]} $
because of theorem \ref{th:inn_aut}
and because $ \rad(\beta) = T^{[2]} $.

\medskip

If the homogeneous components have dimension $4$,
which corresponds to cases (3-a), (3-b), (3-c) and (3-d),
then $ \cD_e \cong \HH $ and
we can apply the double centralizer theorem
(see \cite[theorem 19]{Rod2016})
to express $\cD$ as the graded tensor product:
\begin{equation}
\cD \cong \cD_e \otimes C_{\cD}( \cD_e )
\end{equation}
So we obtain equations \eqref{eq:stab_3-a-b-c} and \eqref{eq:stab_3-d}
from equations \eqref{eq:stab_1-a-b-c} and \eqref{eq:stab_1-d}.

\medskip

If $\Gamma_0$ is of type (2-a) or (2-b),
then $ \Stab(\Gamma_0) \cong \cD_{\hom} / \RR^{\times} $
by theorem \ref{th:inn_aut}.
The degree defines a group epimorphism:
\begin{equation}
\overline{\deg} :
\cD_{\hom} / \RR^{\times} \to T
\end{equation}
The kernel of this epimorphism is:
\begin{equation}
\ker \overline{\deg} =
\cD_e^{\times} / \RR^{\times}
\cong \CC^{\times} / \RR^{\times}
\end{equation}
In order to see that this epimorphism can be split,
we apply \cite[proposition 20]{Rod2016}
to obtain a refinement of the division grading $\Gamma_0$
with homogeneous components of dimension $1$
and support $ \ZZ_2 \times T $.
Thus we obtain a subgroup of $ \cD_{\hom} / \RR^{\times} $
isomorphic to $ \ZZ_2 \times T $,
and keeping half of the elements
we obtain a section $ T \to \cD_{\hom} / \RR^{\times} $
of our epimorphism.
Note that this refinement construction,
and therefore also the isomorphism of
equation \eqref{eq:stab_2-a-b-c},
depend on the choice of
a nonzero homogeneous element $ X_g \in \cD_g $
of degree $ g \in T \setminus K $
(see \cite[remark 21]{Rod2016}).
Finally we compute the action of $T$
on $ \cD_e^{\times} / \RR^{\times} $;
if $ \cD_e = \RR I \oplus \RR J \cong \CC $
with $ J^2 = -I $,
then \cite[remark 21]{Rod2016} implies that
for any $ X_t \in \cD_t $ and $ a,b \in \RR $
we have:
\begin{equation}
X_t (aI+bJ) X_t^{-1} =
\begin{cases}
	aI+bJ & \text{if } t \in K \\
	aI-bJ & \text{if } t \in T \setminus K
\end{cases}
\end{equation}

\medskip

If $\Gamma_0$ is of type (2-c),
fix $ g \in T \setminus K $ and write,
as in \cite[proof of theorems 22 and 23]{Rod2016}:
\begin{equation}
\cD \cong ( \cD_e \oplus \cD_g )
\otimes C_{\cD} ( \cD_e \oplus \cD_g )
\end{equation}
Note that $ \cD_e \oplus \cD_g $
is isomorphic either to $M_2(\RR)$ or $\HH$
and its support is $ \{ e,g \} $;
and note that $ C_{\cD} ( \cD_e \oplus \cD_g ) $
is isomorphic to $M_{n/2}(\CC)$
and its support is $K$.
So this reduces this case to cases (2-a), (2-b) and (1-c).
Thus:
\begin{equation}
\Stab(\Gamma_0) \cong
( ( \CC^{\times} / \RR^{\times} ) \rtimes \{ e,g \} ) \times K
\cong
( \CC^{\times} / \RR^{\times} ) \rtimes T
\end{equation}
This argument can also be applied to case (2-d).

\medskip

Finally assume that $\Gamma_0$ is of type (2-e).
First we are going to prove that
any automorphism $ \psi \in \Stab(\Gamma_0) $ is inner,
that is, that $\psi$ is of the first kind.
Take any $ t \in T $ of order $4$,
and any normalized element $ X_t \in \cD_t $
(that is, $ X_t^4 = -I $).
By \cite[equation (11)]{Rod2016}
$ X_t^2 \in Z(\cD) $ and
$ \psi(X_t)^2 = X_t^2 $.
Therefore
$ \psi(X_t^2) = \psi(X_t)^2 = X_t^2 $,
and $\psi$ is of the first kind.

Now we make arguments similar to those of cases (2-a) and (2-b).
Theorem \ref{th:inn_aut} implies that
$ \Stab(\Gamma_0) \cong
\cD_{\hom} / ( Z(\cD) \cap \cD_{\hom} ) $.
Recall that $ \supp Z(\cD) = T^{[2]} $,
so the degree defines a group epimorphism:
\begin{equation}
\overline{\deg} :
\cD_{\hom} / ( Z(\cD) \cap \cD_{\hom} ) \to T / T^{[2]}
\end{equation}
The kernel of this epimorphism is
isomorphic to $ \CC^{\times} / \RR^{\times} $.
Taking a refinement of $\Gamma_0$
we get a subgroup of $ \cD_{\hom} / \RR^{\times} $
isomorphic to $ \ZZ_2 \times T $.
Since the grading group is abelian,
the center $Z(\cD)$ is a graded subspace of this refinement,
and its support is $ \{ \bar{0} \} \times T^{[2]} $.
Therefore we can get a subgroup of
$ \cD_{\hom} / ( Z(\cD) \cap \cD_{\hom} ) $
isomorphic to $ T / T^{[2]} $,
and a section of our epimorphism.
\end{proof}

Recall that the real algebra $\HH$ is central simple,
so all its automorphisms are inner,
that is,
$ \Aut(\HH) \cong \HH^{\times} / \RR^{\times} $,
and also recall that any automorphism of $\HH$
acts on the imaginary part of $\HH$ as a rotation,
so $ \Aut(\HH) \cong SO(3) $.
On the other hand
$ \CC^{\times} / \RR^{\times} $ is isomorphic to the unit circle,
$ \CC^{\times} / \RR^{\times} \cong SO(2) $,
and to the group of automorphisms of $\HH$ that act trivially on $\CC$,
that is,
$ \CC^{\times} / \RR^{\times} \cong \Aut_{\CC} (\HH) $.

\medskip

In the following examples we compute,
on some real algebras of low dimension,
all fine abelian group gradings
(up to equivalence of gradings)
and its automorphism groups
(up to isomorphism of groups).

\begin{example}
There are,
up to equivalence,
two fine abelian group gradings
on the real algebra $M_2(\RR)$.
\begin{enumerate}
\item
$M_2(\cD)$ with $ \cD \cong \RR $ trivially graded.
The universal abelian group is
$\ZZ$.
The Weyl group is
$ \Sym(2) \cong \ZZ_2 $.
The stabilizer is
$\RR^{\times}$.
\item
$ \cD \cong M_2(\RR) $ of type (1-a).
The universal abelian group is
$\ZZ_2^2$.
The Weyl group is
$\ZZ_2$.
The stabilizer is
$\ZZ_2^2$.
\end{enumerate}
\end{example}

\begin{example}
There is,
up to equivalence,
one fine abelian group grading
on the real algebra $\HH$.
\begin{enumerate}
\item
$ \cD \cong \HH $ of type (1-b).
The universal abelian group is
$\ZZ_2^2$.
The Weyl group is
$\Sym(3)$.
The stabilizer is
$\ZZ_2^2$.
\end{enumerate}
\end{example}

\begin{example}
There are,
up to equivalence,
three fine abelian group gradings
on the real algebra $M_2(\CC)$.
\begin{enumerate}
\item
$M_2(\cD)$ with $ \cD \cong \CC $ of type (1-c).
The universal abelian group is
$ \ZZ \times \ZZ_2 $.
The Weyl group is
$ \ZZ_2 \times \Sym(2)
\cong \ZZ_2^2 $.
The stabilizer is
$ \RR^{\times} \times \ZZ_2 $.
\item
$ \cD \cong M_2(\CC) $ of type (1-c).
The universal abelian group is
$\ZZ_2^3$.
The Weyl group is
$\Sym(3)$.
The stabilizer is
$\ZZ_2^3$.
\item
$ \cD \cong M_2(\CC) $ of type (1-d).
The universal abelian group is
$ \ZZ_2 \times \ZZ_4 $.
The Weyl group is
$\ZZ_2^2$.
The stabilizer is
$\ZZ_2^2$.
\end{enumerate}
\end{example}

\begin{example}
There are,
up to equivalence,
two fine abelian group gradings
on the real algebra $M_3(\CC)$.
\begin{enumerate}
\item
$M_3(\cD)$ with $ \cD \cong \CC $ of type (1-c).
The universal abelian group is
$ \ZZ^2 \times \ZZ_2 $.
The Weyl group is
$ \ZZ_2^2 \rtimes \Sym(3)
\cong \Sym(4) $.
The stabilizer is
$ (\RR^{\times})^2 \times \ZZ_2 $.
\item
$ \cD \cong M_3(\CC) $ of type (2-f).
This is a complex grading.
The universal abelian group is
$\ZZ_3^2$.
The Weyl group is
$ \Aut( \ZZ_3^2 )
\cong \operatorname{GL} (2,3)
= \{ A \in M_2 (\ZZ_3) \mid \det A \neq 0 \} $.
The stabilizer is
$\ZZ_3^2$.
\end{enumerate}
\end{example}

\section*{Acknowledgments}

I want to thank Alberto Elduque
for his constant and disinterested help,
and Mikhail Kochetov
for numerous fruitful discussions about gradings and mathematics in general.
Especially
I want to remark the contribution of
both Mikhail and Alberto
to theorem \ref{th:fine_grad}.

Supported by grant
MTM2017-83506-C2-1-P
of the Agencia Espa\~nola de Investigaci\'on (AEI)
and Fondo Europeo de Desarrollo Regional (FEDER).

\end{document}